\newcounter{thmctr}
\newtheorem{thm}[thmctr]{Theorem}
\newtheorem{lemma}[thmctr]{Lemma}
\newtheorem{prop}[thmctr]{Proposition}
\theoremstyle{definition}
\newtheorem*{definition}{Definition}
\newtheorem*{remarks}{Remarks}
\theoremstyle{plain}
\tikzstyle{pathscale}=[]
\tikzstyle{vertex}=[circle,fill=black,inner sep=2pt]
\tikzstyle{vertrect}=[draw,rectangle,inner sep=2pt]
\tikzstyle{vertdia}=[draw,diamond,inner sep=2pt]
\newcommand{\dotp}[2]{\left< #1, #2 \right>}
\newcommand{\propeig}[1]{\texttt{Eig}[#1]\xspace}
\newcommand{\propexpand}[1]{\texttt{Expand}[#1]\xspace}
\newcommand{\propcount}[1]{\texttt{Count}[#1]\xspace}
\newcommand{\propcycle}[2][4]{\texttt{Cycle$_{#1}$}[#2]\xspace}
\newcommand{\tr}[1]{\text{Tr}\left[#1\right]}
\newcommand{\prooftext}{Proof\xspace}
\newcommand{\powerscountcyclescite}{\cite[Proposition 6]{hqsi-lenz-quasi12}\xspace}
\newcommand{\dhruvuni}{University of Illinois at Chicago \\ mubayi@uic.edu}
\newcommand{\johnuni}{University of Illinois at Chicago \\ lenz@math.uic.edu}
\newcommand{\dhruvfoot}{\footnote{Research supported in part by  NSF Grants 0969092 and 1300138.}}
\newcommand{\johnfoot}{\footnote{Research partly supported by NSA Grant H98230-13-1-0224.}}
\title{Eigenvalues of Non-Regular Linear Quasirandom Hypergraphs}
\author{John Lenz \johnfoot \\ \johnuni \and Dhruv Mubayi \dhruvfoot \\ \dhruvuni}
\begin{document}

\maketitle

\begin{abstract}
  Chung, Graham, and Wilson proved that a graph is quasirandom if and only if there is a large gap
  between its first and second largest eigenvalue.  Recently, the authors extended this
  characterization to $k$-uniform hypergraphs, but only for the so-called coregular $k$-uniform
  hypergraphs.  In this paper, we extend this characterization to all $k$-uniform hypergraphs, not
  just the coregular ones.  Specifically, we prove that if a $k$-uniform hypergraph satisfies the
  correct count of a specially defined four-cycle, then there is a gap between its first and second
  largest eigenvalue.
\end{abstract}

\section{Introduction} 

The study of quasirandom or pseudorandom graphs was initiated by Thomason~\cite{qsi-thomason87,
qsi-thomason87-2} and then refined by Chung, Graham, and Wilson~\cite{qsi-chung89}, resulting in a
list of equivalent (deterministic) properties of graph sequences which are inspired by $G(n,p)$.
Almost immediately after proving their graph theorem, Chung and Graham~\cite{hqsi-chung12,
hqsi-chung90, hqsi-chung90-2, hqsi-chung91, hqsi-chung92} began investigating a $k$-uniform
hypergraph generalization.  Since then, many authors have studied hypergraph
quasirandomness~\cite{hqsi-austin10, hqsi-conlon12, rrl-frankl92, hqsi-gowers06, hqsi-keevash09,
hqsi-kohayakawa10, hqsi-kohayakawa02, hqsi-lenz-quasi12, hqsi-lenz-poset12, hqsi-towsner14}.

One important $k$-uniform hypergraph quasirandom property is \texttt{Disc}, which states that all
sufficiently large vertex sets have roughly the same edge density as the entire hypergraph.
Kohayakawa, Nagle, R\"odl, and Schacht~\cite{hqsi-kohayakawa10} and Conlon, H\`{a}n, Person, and
Schacht~\cite{hqsi-conlon12} studied \texttt{Disc} and found several properties equivalent to it,
but were not able to find a generalization of a graph property called \texttt{Eig}.  In graphs,
\texttt{Eig} states that the first and second largest (in absolute value) eigenvalues of the
adjacency matrix are separated.  The authors~\cite{hqsi-lenz-quasi12} answered this question by
defining a property \texttt{Eig} for $k$-uniform hypergraphs and showed that it is equivalent to
\texttt{Disc}, but only proved this for so-called coregular sequences.  In this paper we prove this
equivalence for all $k$-uniform hypergraph sequences, not just the coregular ones.  Before stating
our result, we need some definitions.

Let $k \geq 2$ be an integer and let $\pi$ be a proper partition of $k$, by which we mean that $\pi$
is an unordered list of at least two positive integers whose sum is $k$.  For the partition $\pi$ of
$k$ given by $k = k_1 + \dots + k_t$, we will abuse notation by saying that $\pi = k_1 + \dots +
k_t$.  A \emph{$k$-uniform hypergraph with loops} $H$ consists of a finite set $V(H)$ and a
collection $E(H)$ of $k$-element multisets of elements from $V(H)$.  Informally, every edge has size
exactly $k$ but a vertex is allowed to be repeated inside of an edge.    If $F$ and $G$ are
$k$-uniform hypergraphs with loops, a \emph{labeled copy of $F$ in $H$} is an edge-preserving
injection $V(F) \rightarrow V(H)$, i.e.\ an injection $\alpha : V(F) \rightarrow V(H)$ such that if
$E$ is an edge of $F$, then $\{ \alpha(x) : x \in E \}$ is an edge of $H$.  The following is our
main theorem.

\begin{thm} \label{thm:countToEig}
  Let $0 < p < 1$ be a fixed constant and let $\mathcal{H} = \{H_n\}_{n\rightarrow \infty}$ be a
  sequence of $k$-uniform hypergraphs with loops such that $|V(H_n)| = n$ and $|E(H_n)| \geq p
  \binom{n}{k}$.  Let $\pi = k_1 + \dots + k_t$ be a proper partition of $k$ and let $\ell \geq 1$.
  Assume that $\mathcal{H}$ satisfies the property
  \begin{itemize}
    \item \propcycle[4\ell]{$\pi$}:  the number of labeled copies of $C_{\pi,4\ell}$ in $H_n$ is at
      most $p^{|E(C_{\pi,4\ell})|} n^{|V(C_{\pi,4\ell})|} + \linebreak[1]
      o(n^{|V(C_{\pi,4\ell})|})$, where $C_{\pi,4\ell}$ is the hypergraph cycle of type $\pi$ and
      length $4\ell$ defined in Section~\ref{sec:cycles-and-traces}.
  \end{itemize}
  Then $\mathcal{H}$ satisfies the property
  \begin{itemize}
    \item \propeig{$\pi$}: $\lambda_{1,\pi}(H_n) = pn^{k/2} + o(n^{k/2})$ and $\lambda_{2,\pi}(H_n)
      = o(n^{k/2})$, where $\lambda_{1,\pi}(H_n)$ and $\lambda_{2,\pi}(H_n)$ are the first and
      second largest eigenvalues of $H_n$ with respect to $\pi$, defined in
      Section~\ref{sec:nonregular-overview}.
  \end{itemize}
\end{thm}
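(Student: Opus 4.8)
The plan is to run the Chung--Graham--Wilson trace method, transported to the matrix $M_\pi(H_n)$ whose singular values are, by definition (Section~\ref{sec:nonregular-overview}), the eigenvalues $\lambda_{i,\pi}(H_n)$. I would deduce \propeig{$\pi$} from three ingredients: a lower bound $\lambda_{1,\pi}(H_n)\ge(1-o(1))pn^{k/2}$, an upper bound on the moment sum $\sum_i\lambda_{i,\pi}(H_n)^{4\ell}$, and an elementary subtraction. For the lower bound I would invoke the variational description of the largest singular value of $M_\pi(H_n)$ and test it against the constant (suitably normalized all-ones) vectors on each of the coordinate classes of $\pi$; the associated multilinear form then evaluates, up to lower-order corrections, to a fixed constant times $|E(H_n)|\cdot n^{-k/2}$, and the normalization built into the definition of $\lambda_{1,\pi}$ is exactly the one for which $|E(H_n)|\ge p\binom nk$ forces $\lambda_{1,\pi}(H_n)\ge(1-o(1))pn^{k/2}$.

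The heart of the argument is the trace identity
\[
  \sum_i\lambda_{i,\pi}(H_n)^{4\ell}=\tr{\bigl(M_\pi(H_n)M_\pi(H_n)^{*}\bigr)^{2\ell}},
\]
whose right-hand side I claim equals the number of labeled copies of $C_{\pi,4\ell}$ in $H_n$ plus an error term of size $o\bigl(n^{|V(C_{\pi,4\ell})|}\bigr)$. The displayed equality is pure linear algebra. The remaining equality is precisely the role of Section~\ref{sec:cycles-and-traces}: the hypergraph cycle $C_{\pi,4\ell}$ is constructed so that its homomorphic images in $H_n$ are in bijection with the closed walks of length $2\ell$ in the incidence structure that computes $\tr{(M_\pi M_\pi^{*})^{2\ell}}$, and the gap between counting homomorphic images and counting injective (labeled) copies is a sum over the bounded number of ways to identify two or more vertices of $C_{\pi,4\ell}$, each identification saving a factor of $n$, so it is $o\bigl(n^{|V(C_{\pi,4\ell})|}\bigr)$. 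Combining this with the hypothesis \propcycle[4\ell]{$\pi$} and with the bookkeeping identity $p^{|E(C_{\pi,4\ell})|}n^{|V(C_{\pi,4\ell})|}=(pn^{k/2})^{4\ell}$ (which merely records that $C_{\pi,4\ell}$ has $4\ell$ edges and $2k\ell$ vertices, the parameters of the corresponding random object) yields $\sum_i\lambda_{i,\pi}(H_n)^{4\ell}\le(1+o(1))(pn^{k/2})^{4\ell}$.

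To finish, note that all the terms $\lambda_{i,\pi}(H_n)^{4\ell}$ are nonnegative, so the two bounds give $(1-o(1))(pn^{k/2})^{4\ell}\le\lambda_{1,\pi}(H_n)^{4\ell}\le\sum_i\lambda_{i,\pi}(H_n)^{4\ell}\le(1+o(1))(pn^{k/2})^{4\ell}$, hence $\lambda_{1,\pi}(H_n)=(1+o(1))pn^{k/2}$. Subtracting the contribution of $i=1$ leaves $\sum_{i\ge2}\lambda_{i,\pi}(H_n)^{4\ell}=o(n^{2k\ell})$, and since these terms are also nonnegative, $\lambda_{2,\pi}(H_n)^{4\ell}=o(n^{2k\ell})$, i.e.\ $\lambda_{2,\pi}(H_n)=o(n^{k/2})$. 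This is exactly \propeig{$\pi$}.

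I expect the main obstacle to lie in the second step once coregularity is dropped. When $H_n$ is not coregular the matrix $M_\pi(H_n)$ is no longer essentially $0/1$, and for a partition of $k$ into $t\ge3$ parts one must first decide on the correct flattening of the underlying tensor; with that in hand one still has to check carefully that (i) the $i$-th singular value of the matrix actually being traced coincides with the quantity the paper calls $\lambda_{i,\pi}$, (ii) the closed walks of that matrix are counted by homomorphisms of $C_{\pi,4\ell}$ and not of some neighboring hypergraph, and (iii) the degenerate non-injective walks and the various $O(n^{-1})$ boundary corrections are controlled uniformly in $n$. Allowing loops in the hypergraphs and several parts in $\pi$ is what makes this combinatorial bookkeeping delicate, even though each individual estimate is routine --- the coregular proof sidestepped most of it because there one traces a single clean $0/1$ matrix.
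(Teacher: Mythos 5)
Your trace-method skeleton is the right one for bounding $\lambda_{1,\pi}$ and for the spectral gap of the flattened power matrix, but the proposal rests on a premise that the paper's definitions do not support, and the gap this creates is precisely the technical heart of the paper that you have not supplied. You write that $\lambda_{i,\pi}(H_n)$ are ``by definition'' the singular values of a matrix $M_\pi(H_n)$. They are not. Section~\ref{sec:nonregular-overview} defines only two quantities: $\lambda_{1,\pi}(H) = \lVert\tau_{\vec\pi}\rVert$ is the spectral norm of a $t$-linear map on a product of tensor spaces, and $\lambda_{2,\pi}(H) = \lVert\tau_{\vec\pi} - \tfrac{k!|E(H)|}{n^k} J_{\vec\pi}\rVert$ is the spectral norm of the \emph{deflated} $t$-linear map. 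For a partition into $t\ge 3$ parts there is no matrix whose ordered singular values are $\lambda_{1,\pi},\lambda_{2,\pi},\lambda_{3,\pi},\dots$; indeed there is no $\lambda_{3,\pi}$ in the paper at all. Consequently your ``elementary subtraction'' at the end --- peeling off the $i=1$ term from $\sum_i\lambda_{i,\pi}^{4\ell}$ and reading off $\lambda_{2,\pi}^{4\ell}$ as the next term --- has no meaning here. Deflating a multilinear map by a rank-one piece is not the same as dropping the top singular value of a flattening, and for $t\ge 3$ the spectral norm of $\tau_{\vec\pi} - qJ_{\vec\pi}$ is not controlled by any one eigenvalue of the matrix you are tracing.

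What the paper traces is the specific square symmetric matrix $A = A[\tau_{\vec\pi}^{2^{t-1}}]$ (a repeated $\ast$-power of the $t$-linear map, then flattened to a bilinear form). Your argument, repeated verbatim, does correctly give $\mu_1(A) = (1+o(1))\tau(\hat 1,\dots,\hat 1)^{2^{t-1}}$ and $\mu_2(A) = o(\mu_1(A))$ from \propcycle[4\ell]{$\pi$} via Proposition~\ref{prop:powerscountcycles}. But the remaining step --- converting a $\mu_1$-vs-$\mu_2$ gap for $A$ into the bound $\lVert\tau_{\vec\pi} - qJ_{\vec\pi}\rVert = o(n^{k/2})$ --- is exactly Proposition~\ref{prop:algebraicfacts}, whose proof occupies Section~\ref{sec:algebraicprops} and involves Lemmas~\ref{lem:powerupperbound}, \ref{lem:largesteigpower}, and \ref{lem:matrixcloseone}, together with a symmetry argument over all orderings $\vec\pi'$ of $\pi$. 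You flag point (i) of this difficulty in your last paragraph (``the $i$-th singular value of the matrix actually being traced coincides with the quantity the paper calls $\lambda_{i,\pi}$'') but then treat it as routine bookkeeping. It is not routine --- it is the main new ingredient, and without it the proof does not close.
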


When Theorem~\ref{thm:countToEig} is combined with \cite{hqsi-lenz-quasi12}, we obtain the following
theorem.

\begin{thm}
  Let $0 < p < 1$ be a fixed constant and let $\mathcal{H} = \{H_n\}_{n\rightarrow\infty}$ be a
  sequence of $k$-uniform hypergraphs with loops such that $|V(H_n)| = n$ and $|E(H_n)| \geq p \binom{n}{k} +
  o(n^k)$.  Let $\pi = k_1 + \dots + k_t$ be a proper partition of $k$.
  The following properties are equivalent:

  \begin{itemize}
    \item \propeig{$\pi$}: $\lambda_{1,\pi}(H_n) = pn^{k/2} + o(n^{k/2})$ and $\lambda_{2,\pi}(H_n)
      = o(n^{k/2})$, where $\lambda_{1,\pi}(H_n)$ and $\lambda_{2,\pi}(H_n)$ are the first and
      second largest eigenvalues of $H_n$ with respect to $\pi$, defined in
      Section~\ref{sec:nonregular-overview}.

    \item \propexpand{$\pi$}: For all $S_i \subseteq \binom{V(H_n)}{k_i}$ where $1 \leq i \leq t$,
      \begin{align*} e(S_1,\dots,S_t) =  p \prod_{i=1}^t \left| S_i \right| + o\left(n^{k}\right)
      \end{align*} where $e(S_1,\dots,S_t)$ is the number of tuples $(s_1,\dots,s_t)$ such
      that $s_1 \cup \dots \cup s_t$ is a hyperedge and $s_i \in S_i$.

    \item \propcount{$\pi$-linear}: If $F$ is an $f$-vertex, $m$-edge, $k$-uniform, $\pi$-linear
      hypergraph, then the number of labeled copies of $F$ in $H_n$ is $p^m n^f + o(n^f)$.
      The definition of $\pi$-linear appears in \cite[Section 1]{hqsi-lenz-quasi12}.

    \item \propcycle{$\pi$}: The number of labeled copies of $C_{\pi,4}$ in $H_n$ is at most
      $p^{|E(C_{\pi,4})|} n^{|V(C_{\pi,4})|} + o(n^{|V(C_{\pi,4})|})$, where $C_{\pi,4}$ is the
      hypergraph four cycle of type $\pi$ which is defined in Section~\ref{sec:cycles-and-traces}.

    \item \propcycle[4\ell]{$\pi$}:  the number of labeled copies of $C_{\pi,4\ell}$ in $H_n$ is at
      most $p^{|E(C_{\pi,4\ell})|} n^{|V(C_{\pi,4\ell})|} + \linebreak[1]
      o(n^{|V(C_{\pi,4\ell})|})$, where $C_{\pi,4\ell}$ is the hypergraph cycle of type $\pi$ and
      length $4\ell$ defined in Section~\ref{sec:cycles-and-traces}.
  \end{itemize}
\end{thm}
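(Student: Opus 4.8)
The plan is to deduce the theorem from a cycle of implications, almost all of which are already in hand. The implications $\propeig{$\pi$} \Rightarrow \propexpand{$\pi$} \Rightarrow \propcount{$\pi$-linear}$ were established in \cite{hqsi-lenz-quasi12} for arbitrary sequences satisfying $|E(H_n)| \geq p\binom{n}{k} + o(n^k)$, with no coregularity hypothesis; the coregularity assumption in that paper was needed only to pass \emph{back} from a cycle count to the eigenvalue gap, and that is precisely the step that Theorem~\ref{thm:countToEig} now supplies in full generality. So the only genuinely new ingredient is Theorem~\ref{thm:countToEig}, and the remaining task is to check that the cycle closes.

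In detail I would argue as follows. First, $\propeig{$\pi$} \Rightarrow \propexpand{$\pi$}$ and $\propexpand{$\pi$} \Rightarrow \propcount{$\pi$-linear}$ by the quoted results of \cite{hqsi-lenz-quasi12}. Next, for every $\ell \geq 1$ the hypergraph $C_{\pi,4\ell}$ of Section~\ref{sec:cycles-and-traces} is a $\pi$-linear hypergraph, so $\propcount{$\pi$-linear}$ yields the exact count $p^{|E(C_{\pi,4\ell})|} n^{|V(C_{\pi,4\ell})|} + o(n^{|V(C_{\pi,4\ell})|})$ of labeled copies of $C_{\pi,4\ell}$; in particular the number of such copies is at most that quantity, which is exactly $\propcycle[4\ell]{$\pi$}$ (and, taking $\ell = 1$, $\propcycle{$\pi$}$). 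Finally, Theorem~\ref{thm:countToEig} gives $\propcycle[4\ell]{$\pi$} \Rightarrow \propeig{$\pi$}$ for every $\ell \geq 1$, and specializing to $\ell = 1$ also gives $\propcycle{$\pi$} \Rightarrow \propeig{$\pi$}$. Chasing these implications around, each of the five properties implies $\propeig{$\pi$}$ and $\propeig{$\pi$}$ implies each of the other four, so all five are equivalent.

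The real content is entirely inside Theorem~\ref{thm:countToEig}, whose proof occupies the rest of the paper; once it is available, the present statement is a short deduction, and the only thing to get right is bookkeeping. One must confirm that the ``$\pi$-linear'' hypergraphs of \cite[Section 1]{hqsi-lenz-quasi12} include the cycles $C_{\pi,4\ell}$ (so that $\propcount{$\pi$-linear}$ really does specialize to the cycle counts), that the slightly different edge-density hypotheses ($p\binom{n}{k}$ in Theorem~\ref{thm:countToEig} versus $p\binom{n}{k} + o(n^k)$ here) are compatible — they are, since everything is stated up to $o(\cdot)$ error — and that the implications borrowed from \cite{hqsi-lenz-quasi12} are the ones proved there without the coregularity assumption. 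I do not expect any of this to present a genuine obstacle.
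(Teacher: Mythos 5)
Your proposal is correct and matches the paper's own route exactly: the paper simply states that the theorem follows by combining Theorem~\ref{thm:countToEig} with the implications proved in \cite{hqsi-lenz-quasi12}, which is precisely the cycle of implications you chase. Your observation about the edge-density hypotheses is also consistent with the proof of Theorem~\ref{thm:countToEig}, which in fact already uses the form $|E(H_n)| \geq p\binom{n}{k} + o(n^k)$ at equation~\eqref{eq:c4toeigboundtaunorm}.
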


The remainder of this paper is organized as follows.  Section~\ref{sec:nonregular-overview} contains
the definitions of eigenvalues we will require from~\cite{hqsi-lenz-quasi12} and also a statement of
the main technical contribution of this note.  Section~\ref{sec:algebraicprops} contains the
algebraic properties required for the proof of Theorem~\ref{thm:countToEig}.
Section~\ref{sec:cycles-and-traces} contains the definition of the cycle $C_{\pi,4\ell}$
from~\cite{hqsi-lenz-quasi12} and finally Section~\ref{sec:nonregular-cycle-to-eig} contains the
proof of Theorem~\ref{thm:countToEig}.

\section{Eigenvalues and Linear Maps} 
\label{sec:nonregular-overview}

In this section, we give the definitions of the first and second largest eigenvalues of a
hypergraph.  These definitions are identical to those given in~\cite{hqsi-lenz-quasi12}.  We also
state one of our main results, which extends to $k$-uniform hypergraphs the fact that in a graph
sequence with density $p$ and $\lambda_2(G) = o(\lambda_1(G))$, the distance between the all-ones
vector and the eigenvector corresponding to the largest eigenvalue is $o(1)$.

\begin{definition} \textbf{(Friedman and
  Wigderson~\cite{ee-friedman95,ee-friedman95-2})} Let $H$ be a $k$-uniform hypergraph with loops.
  The \emph{adjacency map of $H$} is the symmetric $k$-linear map $\tau_H : W^k \rightarrow
  \mathbb{R}$ defined as follows, where $W$ is the vector space over $\mathbb{R}$ of dimension
  $|V(H)|$.  First, for all $v_1, \dots, v_k \in V(H)$, let
  \begin{align*}
    \tau_H(e_{v_1}, \dots, e_{v_k}) = 
    \begin{cases}
      1 & \left\{ v_1, \ldots, v_k \right\} \in E(H), \\
      0 & \text{otherwise},
    \end{cases}
  \end{align*}
  where $e_v$ denotes the indicator vector of the vertex $v$, that is the vector which has a one in
  coordinate $v$ and zero in all other coordinates.  We have defined the value of $\tau_H$ when the
  inputs are standard basis vectors of $W$.  Extend $\tau_H$ to all the domain linearly.
\end{definition}

\begin{definition}
  Let $W$ be a finite dimensional vector space over $\mathbb{R}$, let $\sigma : W^k \rightarrow
  \mathbb{R}$ be any $k$-linear function, and let $\vec{\pi}$ be a proper ordered partition of $k$,
  so $\vec{\pi} = (k_1,\dots,k_t)$ for some integers $k_1,\dots,k_t$ with $t \geq 2$.  Now define a
  $t$-linear function $\sigma_{\vec{\pi}} : W^{\otimes k_1} \times \dots \times W^{\otimes k_t}
  \rightarrow \mathbb{R}$ by first defining $\sigma_{\vec{\pi}}$ when the inputs are basis vectors
  of $W^{\otimes k_i}$ and then extending linearly.  For each $i$, $B_i = \{ b_{i,1} \otimes \cdots
  \otimes b_{i,k_i} : b_{i,j} \, \text{is a standard basis vector of } $W$ \}$ is a basis of
  $W^{\otimes k_i}$, so for each $i$, pick $b_{i,1} \otimes \cdots \otimes b_{i,k_i} \in B_i$ and
  define
  \begin{align*}
    \sigma_{\vec{\pi}} \left( b_{1,1} \otimes \dots \otimes b_{1,k_1}, \dots,
                              b_{t,1} \otimes \dots \otimes b_{t,k_t} \right) 
     = \sigma(b_{1,1},\dots,b_{1,k_1},\dots,b_{t,1},\dots,b_{t,k_t}).
  \end{align*}
  Now extend $\sigma_{\vec{\pi}}$ linearly to all of the domain.
  $\sigma_{\vec{\pi}}$ will be $t$-linear since $\sigma$ is $k$-linear.
\end{definition}

\begin{definition}
  Let $W_1,\dots,W_k$ be finite dimensional vector spaces over $\mathbb{R}$, let $\left\lVert \cdot
  \right\rVert$ denote the Euclidean $2$-norm on $W_i$, and
  let $\phi : W_1 \times \dots \times W_k \rightarrow \mathbb{R}$ be a $k$-linear map.  The
  \emph{spectral norm of $\phi$} is
  \begin{align*}
    \left\lVert \phi \right\rVert 
      = \sup_{\substack{x_i \in W_i \\ \left\lVert x_i \right\rVert = 1}} \left|
          \phi(x_1,\dots,x_k) \right|.
  \end{align*}
\end{definition}

\begin{definition}
  Let $H$ be a $k$-uniform hypergraph with loops and let $\tau = \tau_H$ be the ($k$-linear)
  adjacency map of $H$.  Let $\pi$ be any (unordered) partition of $k$ and let $\vec{\pi}$ be any
  ordering of $\pi$.  The \emph{largest and second largest eigenvalues of $H$ with respect to
  $\pi$}, denoted $\lambda_{1,\pi}(H)$ and $\lambda_{2,\pi}(H)$, are defined as
  \begin{align*}
    \lambda_{1,\pi}(H) := \left\lVert \tau_{\vec{\pi}} \right\rVert
    \quad \text{and} \quad
    \lambda_{2,\pi}(H) := \left\lVert \tau_{\vec{\pi}} - \frac{k!|E(H)|}{n^k} J_{\vec{\pi}}
    \right\rVert.
  \end{align*}
\end{definition}

\begin{definition}
  Let $V_1,\dots,V_t$ be finite dimensional vector spaces over $\mathbb{R}$ and let $\phi,\psi : V_1
  \times \dots \times V_t \rightarrow \mathbb{R}$ be $t$-linear maps.  The \emph{product} of $\phi$
  and $\psi$, written $\phi \ast \psi$, is a $(t-1)$-linear map defined as follows.  Let
  $u_1,\dots,u_{t-1}$ be vectors where $u_i \in V_i$. Let $\{b_1,\dots,b_{\dim(V_t)}\}$ be any
  orthonormal basis of $V_t$.
  \begin{gather*}
    \phi \ast \psi : (V_1 \otimes V_1) \times (V_2 \otimes V_2) \times \dots
                     \times (V_{t-1} \otimes V_{t-1}) \rightarrow \mathbb{R} \\
    \phi \ast \psi(u_1 \otimes v_1, \dots, u_{t-1} \otimes v_{t-1}) 
       := \sum_{j=1}^{\dim(V_t)} \phi(u_1,\dots,u_{t-1},b_j) \psi(v_1,\dots,v_{t-1},b_j)
  \end{gather*}
  Extend the map $\phi \ast \psi$ linearly to all of the domain to produce a $(t-1)$-linear map.
\end{definition}

Lemma~\ref{lem:productbasisindependent} shows that the maps are well defined: the map is the same for any
choice of orthonormal basis by the linearity of $\phi$ and $\psi$.

\begin{definition}
  Let $V_1,\dots,V_t$ be finite dimensional vector spaces over $\mathbb{R}$ and let $\phi : V_1
  \times \dots \times V_t \rightarrow \mathbb{R}$ be a $t$-linear map and let $s$ be an integer $0
  \leq s \leq t-1$.  Define
  \begin{gather*}
    \phi^{2^s} : V_1^{\otimes2^{s}} \times \dots \times V_{t-s}^{\otimes2^{s}} \rightarrow
    \mathbb{R}
  \end{gather*}
  where $\phi^{2^0} := \phi$ and $\phi^{2^s} := \phi^{2^{s-1}} \ast \phi^{2^{s-1}}$.
\end{definition}


\begin{definition}
  Let $V_1,\dots,V_t$ be finite dimensional vector spaces over $\mathbb{R}$ and let $\phi : V_1
  \times \dots \times V_t \rightarrow \mathbb{R}$ be a $t$-linear map and define $A[\phi^{2^{t-1}}]$
  to be the following square matrix/bilinear map.  Let $u_1,\dots,u_{2^{t-2}},v_1,\dots,v_{2^{t-2}}$
  be vectors where $u_i, v_i \in V_1$.
  \begin{gather*}
    A[\phi^{2^{t-1}}] : V_1^{\otimes 2^{t-2}} \times V_1^{\otimes 2^{t-2}} \rightarrow \mathbb{R} \\
    A[\phi^{2^{t-1}}](u_1 \otimes \dots \otimes u_{2^{t-2}}, v_1 \otimes \dots v_{2^{t-2}})
    := \phi^{2^{t-1}}(u_1 \otimes v_1 \otimes u_2 \otimes v_2 \otimes \dots \otimes u_{2^{t-2}}
    \otimes v_{2^{t-2}}).
  \end{gather*}
  Extend the map linearly to the entire domain to produce a bilinear map.
\end{definition}

Lemma~\ref{lem:powerismatrix} below proves that $A[\phi^{2^{t-1}}]$ is a square symmetric real
valued matrix.  The following is the main algebraic result required for the proof of
Theorem~\ref{thm:countToEig}.

\begin{prop} \label{prop:algebraicfacts}
  Let $\{\psi_r\}_{r\rightarrow\infty}$ be a sequence of symmetric $k$-linear maps, where $\psi_r :
  V_r^k \rightarrow \mathbb{R}$, $V_r$ is a vector space over $\mathbb{R}$ of finite dimension, and
  $\dim(V_r) \rightarrow \infty$ as $r \rightarrow \infty$. Let $\hat{1}$ denote the all-ones vector
  in $V_r$ scaled to unit length and let $J : V_r^k \rightarrow \mathbb{R}$ be the $k$-linear
  all-ones map.  Let $\pi$ be a proper (unordered) partition of $k$, and assume that for every
  ordering $\vec{\pi}$ of $\pi$,
  \begin{align*}
    \lambda_1(A[\psi_{\vec{\pi}}^{2^{t-1}}]) 
    &= (1+o(1))\psi\left(\hat{1},\dots,\hat{1} \right)^{2^{t-1}}, \\
    \lambda_2(A[\psi_{\vec{\pi}}^{2^{t-1}}]) 
    &= o\left( \lambda_1(A[\psi_{\vec{\pi}}^{2^{t-1}}]) \right).
  \end{align*}
  Then for every ordering $\vec{\pi}$ of $\pi$,
  \begin{align*}
    \left\lVert \psi_{\vec{\pi}} -   qJ_{\vec{\pi}} \right\rVert = o(\psi(\hat{1},\dots,\hat{1})),
  \end{align*}
  where $q = \dim(V_r)^{-k/2} \psi(\hat{1},\dots,\hat{1})$.
\end{prop}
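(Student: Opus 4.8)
The plan is to fix an arbitrary ordering $\vec{\pi}=(k_1,\dots,k_t)$ of $\pi$, abbreviate $\phi:=\psi_{\vec\pi}$, $\mathbf a:=\psi(\hat 1,\dots,\hat 1)$, $A:=A[\phi^{2^{t-1}}]$, and write $\hat{\mathbf 1}_i:=\hat 1^{\otimes k_i}$ for the normalized all-ones vector of $V_r^{\otimes k_i}$ (so $\phi(\hat{\mathbf 1}_1,\dots,\hat{\mathbf 1}_t)=\mathbf a$); one may assume $\mathbf a>0$. Unwinding the definitions gives $qJ_{\vec\pi}(x_1,\dots,x_t)=\mathbf a\prod_i\langle\hat{\mathbf 1}_i,x_i\rangle$, so writing $x_i=\langle\hat{\mathbf 1}_i,x_i\rangle\hat{\mathbf 1}_i+x_i^\perp$ and expanding $\phi(x_1,\dots,x_t)$ multilinearly, every summand except the ``all-$\hat{\mathbf 1}_i$'' one — which is precisely $qJ_{\vec\pi}(x_1,\dots,x_t)$ — has at least one argument orthogonal to the corresponding $\hat{\mathbf 1}_i$ and all arguments of norm $\le 1$. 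Hence $\|\phi-qJ_{\vec\pi}\|\le(2^t-1)\widetilde N$, where $\widetilde N:=\max_i N_i$ and $N_i:=\sup\{\,|\phi(x_1,\dots,x_t)|:\|x_j\|=1\ \forall j,\ x_i\perp\hat{\mathbf 1}_i\,\}$. Since $t$ is fixed it suffices to prove $\widetilde N=o(\mathbf a)$. Now $N_i$ equals $\sup_{\|y\|=1,\,y\perp\hat{\mathbf 1}_i}$ of the spectral norm of $\phi$ with its $i$-th argument set to $y$, and since $\psi$ is symmetric, replacing $\vec\pi$ by the ordering that lists $k_i$ first reduces everything to: \emph{for every ordering} $\vec\pi$, $\ \sup_{\|y\|=1,\,y\perp\hat{\mathbf 1}_1}\|\phi(y,\cdot,\dots,\cdot)\|=o(\mathbf a)$, the norm on the right being the spectral norm of the $(t-1)$-linear form $\phi(y,\cdot,\dots,\cdot)$.

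The key algebraic input is the slice inequality: for every unit $y\in V_r^{\otimes k_1}$,
\[
A\bigl(y^{\otimes 2^{t-2}},\,y^{\otimes 2^{t-2}}\bigr)\ \ge\ \bigl\|\phi(y,\cdot,\dots,\cdot)\bigr\|^{2^{t-1}} .
\]
Indeed $A(y^{\otimes 2^{t-2}},y^{\otimes 2^{t-2}})=\phi^{2^{t-1}}(y^{\otimes 2^{t-1}})$ by the definition of $A[\,\cdot\,]$ (all slots are $y$), and for any multilinear $\chi$ one has $\chi\ast\chi(a_1\otimes a_1,\dots,a_m\otimes a_m)=\sum_j\chi(a_1,\dots,a_m,b_j)^2\ge|\chi(a_1,\dots,a_m,\eta)|^2$ for any unit $\eta$ (take the orthonormal basis $\{b_j\}$ to contain $\eta$, using Lemma~\ref{lem:productbasisindependent} for base-independence). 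Applying this repeatedly as one unwinds $\phi^{2^s}=\phi^{2^{s-1}}\ast\phi^{2^{s-1}}$ and restricts each newly exposed coordinate to a tensor power of a single unit vector, one descends from $\phi^{2^{t-1}}$ to $\phi$, squaring at each of the $t-1$ steps, and ends at $\sup$ over the last $t-1$ arguments of $\phi$, i.e.\ at $\|\phi(y,\cdot,\dots,\cdot)\|^{2^{t-1}}$. Taking $\sup$ over unit $y$ gives $\lambda_1(A)\ge\|\phi\|^{2^{t-1}}$, and $y=\hat{\mathbf 1}_1$ gives $A(\mathbf 1^*,\mathbf 1^*)\ge\mathbf a^{2^{t-1}}$ where $\mathbf 1^*:=\hat{\mathbf 1}_1^{\otimes 2^{t-2}}$ is the normalized all-ones vector of $V_1^{\otimes 2^{t-2}}$. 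Combined with the hypothesis $\lambda_1(A)=(1+o(1))\mathbf a^{2^{t-1}}$, with $\|\phi\|\ge|\phi(\hat{\mathbf 1}_1,\dots,\hat{\mathbf 1}_t)|=\mathbf a$, and with Rayleigh's principle $A(\mathbf 1^*,\mathbf 1^*)\le\lambda_1(A)$, this forces $\|\phi\|=(1+o(1))\mathbf a$ and $A(\mathbf 1^*,\mathbf 1^*)=(1+o(1))\lambda_1(A)$.

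Next comes the spectral gap argument. By Lemma~\ref{lem:powerismatrix} the matrix $A$ is real symmetric; let $w^*$ be a unit eigenvector for its largest eigenvalue $\lambda_1(A)$ and write $\mathbf 1^*=(\cos\theta)w^*+(\sin\theta)z$ with $z\perp w^*$, $\cos\theta\ge 0$. Then $(1+o(1))\lambda_1(A)=A(\mathbf 1^*,\mathbf 1^*)\le\cos^2\theta\,\lambda_1(A)+\sin^2\theta\,\lambda_2(A)$, so $\sin^2\theta\,(\lambda_1(A)-\lambda_2(A))\le o(\lambda_1(A))$, and since $\lambda_2(A)=o(\lambda_1(A))$ we get $\sin^2\theta=o(1)$, hence $\|\mathbf 1^*-w^*\|=o(1)$. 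Fix any unit $y\in V_r^{\otimes k_1}$ with $y\perp\hat{\mathbf 1}_1$; then $w:=y^{\otimes 2^{t-2}}$ is a unit vector with $\langle w,\mathbf 1^*\rangle=\langle y,\hat{\mathbf 1}_1\rangle^{2^{t-2}}=0$, so $|\langle w,w^*\rangle|=|\langle w,w^*-\mathbf 1^*\rangle|\le\|w^*-\mathbf 1^*\|=o(1)$ \emph{uniformly in} $y$. Expanding $A(w,w)$ in an eigenbasis of $A$ and using that $\lambda_2(A)$ is its second largest eigenvalue, $A(w,w)\le\lambda_2(A)+(\lambda_1(A)-\lambda_2(A))\langle w,w^*\rangle^2=o(\lambda_1(A))=o(\mathbf a^{2^{t-1}})$. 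By the slice inequality (applied to this $y$), $\|\phi(y,\cdot,\dots,\cdot)\|^{2^{t-1}}\le A(w,w)=o(\mathbf a^{2^{t-1}})$, hence $\|\phi(y,\cdot,\dots,\cdot)\|=o(\mathbf a)$ uniformly over such $y$. Running this for every ordering of $\pi$ yields $\widetilde N=o(\mathbf a)$ by the first paragraph, and therefore $\|\psi_{\vec\pi}-qJ_{\vec\pi}\|=o(\mathbf a)$.

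I expect the slice inequality $A(y^{\otimes 2^{t-2}},y^{\otimes 2^{t-2}})\ge\|\phi(y,\cdot,\dots,\cdot)\|^{2^{t-1}}$ to be the main work: although each estimate in the descent is a one-line convexity/restriction bound, one must carefully reconcile the interleaving in the definition of $A[\phi^{2^{t-1}}]$ with the recursion $\phi^{2^s}=\phi^{2^{s-1}}\ast\phi^{2^{s-1}}$, and verify level-by-level that restricting the contracted coordinate to a diagonal tensor power is permissible. The remaining delicate points are quantitative bookkeeping: the uniformity of all $o(1)$ terms in $y$ (which holds because $\|w^*-\mathbf 1^*\|$ is a single quantity depending only on $r$), and the fact that the argument genuinely consumes the hypothesis for \emph{every} ordering of $\pi$ — that is exactly what lets us move an arbitrary coordinate into the first position before invoking the slice inequality.
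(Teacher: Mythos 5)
Your proof is correct and takes essentially the same approach as the paper's: both decompose each argument orthogonally against $\hat 1$, cancel the all-$\hat 1$ term against $qJ_{\vec\pi}$, and bound each cross term by combining the power inequality (your ``slice inequality,'' the paper's Lemma~\ref{lem:powerupperbound}) with the spectral-gap/eigenvector-proximity argument (the paper's Lemma~\ref{lem:matrixcloseone}), using symmetry of $\psi$ to move the perpendicular coordinate into the first slot. The only differences are cosmetic: the paper fixes a maximizing tuple and then decomposes, while you decompose first and take a supremum, and the paper packages the two key steps as standalone lemmas that you re-derive inline.
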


For graphs, $A[\tau^2]$ is the adjacency matrix squared so Proposition~\ref{prop:algebraicfacts}
states that $\lVert A - \linebreak[1] \frac{2|E(G)|}{n^2} J \rVert = o(\sqrt{\lambda_1(A^2)})$, exactly
what is proved by Chung, Graham, and Wilson (see the bottom of page 350 in \cite{qsi-chung89}).  The
proof of Proposition~\ref{prop:algebraicfacts} appears in the next section.

\section{Algebraic properties of multilinear maps} 
  \label{sec:algebraicprops}

In this section we prove several algebraic facts about multilinear maps, including
Proposition~\ref{prop:algebraicfacts}.  Throughout this section, $V$ and $V_i$ are finite
dimensional vector spaces over $\mathbb{R}$.  Also in this section we make no distinction between
bilinear maps and matrices, using whichever formulation is convenient. We will use a symbol $\cdot$
to denote the input to a linear map; for example, if $\phi : V_1 \times V_2 \times V_3 \rightarrow
\mathbb{R}$ is a trilinear map and $x_1 \in V_1$ and $x_2 \in V_2$, then by the expression
$\phi(x_1,x_2,\cdot)$ we mean the linear map from $V_3$ to $\mathbb{R}$ which takes a vector $x_3
\in V_3$ to $\phi(x_1,x_2,x_3)$. Lastly, we use several basic facts about tensors, all of which
follow from the fact that for finite dimensional spaces, the tensor product of $V$ and $W$ is the
vector space over $\mathbb{R}$ of dimension $\dim(V) \dim(W)$.  For example, if $x$ and $y$ are unit
length, then $x \otimes y$ is also unit length.

\subsection{Preliminary Lemmas}

\begin{lemma} \label{lem:linearisdotproduct}
  Let $\phi : V \rightarrow \mathbb{R}$ be a linear map.  There exists a vector $v$ such that
  $\phi = \dotp{v}{\cdot}$.
\end{lemma}

\begin{proof} 
$v$ is the vector dual to $\phi$ in the dual of the vector space $V$.  Alternatively,
let the $i$th coordinate of $v$ be $\phi(e_i)$, since then for any $x$,
\begin{align*}
  \phi(x) = \phi \left( \sum \dotp{x}{e_i} e_i \right)
   = \sum \dotp{x}{e_i} \phi(e_i) = \sum \dotp{x}{e_i} \dotp{v}{e_i} = \dotp{x}{v}.
\end{align*}
\end{proof} 

\begin{lemma} \label{lem:productbasisindependent}
  Let $\phi,\psi : V_1 \times \dots \times V_t \rightarrow \mathbb{R}$ be $t$-linear maps.  The maps
  $\phi \ast \psi$ and $A[\phi^{2^{t-1}}]$ are well defined.  Also, $\phi \ast \psi$ is basis
  independent in the sense that the definition of $\phi \ast \psi$ is independent of the choice of
  orthonormal basis $b_1,\dots,b_t$ of $V_t$.
\end{lemma}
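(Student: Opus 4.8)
The plan is to reduce everything to Lemma~\ref{lem:linearisdotproduct}, which says a linear functional on a finite-dimensional inner product space is given by a dot product with a fixed vector. For the product $\phi \ast \psi$, I would fix vectors $u_1,\dots,u_{t-1}$ with $u_i \in V_i$ and consider the two linear maps $\phi(u_1,\dots,u_{t-1},\cdot)$ and $\psi(u_1,\dots,u_{t-1},\cdot)$ from $V_t$ to $\mathbb{R}$. By Lemma~\ref{lem:linearisdotproduct} there are vectors $a, b \in V_t$ with $\phi(u_1,\dots,u_{t-1},x) = \dotp{a}{x}$ and $\psi(u_1,\dots,u_{t-1},x) = \dotp{b}{x}$ for all $x$. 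Then for any orthonormal basis $\{b_1,\dots,b_{\dim V_t}\}$ of $V_t$, Parseval's identity gives
\begin{align*}
  \sum_{j} \phi(u_1,\dots,u_{t-1},b_j)\,\psi(u_1,\dots,u_{t-1},b_j)
  = \sum_j \dotp{a}{b_j}\dotp{b}{b_j} = \dotp{a}{b},
\end{align*}
which is manifestly independent of the chosen orthonormal basis. This shows the defining formula for $\phi \ast \psi$ takes the same value on each generator $u_1 \otimes \cdots \otimes u_{t-1}$ regardless of basis; since those generators span the domain and the extension is linear, $\phi \ast \psi$ is well defined as a map and is basis independent.

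Next I would address well-definedness of $\phi \ast \psi$ as a multilinear map, i.e.\ that the prescription ``$\phi \ast \psi(u_1 \otimes v_1, \dots, u_{t-1} \otimes v_{t-1}) = \sum_j \phi(u_1,\dots,b_j)\psi(v_1,\dots,b_j)$, extended linearly'' is consistent. The subtlety is that a general element of $V_i \otimes V_i$ is a sum of simple tensors in more than one way, so one must check the right-hand side is multilinear and balanced in each pair $(u_i, v_i)$ separately — this is the only place that requires care. The right-hand side is visibly linear in each $u_i$ (using $t$-linearity of $\phi$) and in each $v_i$ (using $t$-linearity of $\psi$), with the other arguments and the summation index held fixed; hence by the universal property of the tensor product it factors through $V_i \otimes V_i$ in the $i$th slot, for each $i$ independently, giving a well-defined $(t-1)$-linear map on $(V_1 \otimes V_1) \times \cdots \times (V_{t-1} \otimes V_{t-1})$.

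Finally, well-definedness of $A[\phi^{2^{t-1}}]$ follows immediately: $\phi^{2^{t-1}}$ is a composite of products $\ast$, hence already well defined by the above and an induction on $s$ (the base case $\phi^{2^0} = \phi$ is trivial, and $\phi^{2^s} = \phi^{2^{s-1}} \ast \phi^{2^{s-1}}$ is well defined once $\phi^{2^{s-1}}$ is). The matrix $A[\phi^{2^{t-1}}]$ is then just a fixed reindexing/restriction of the well-defined multilinear map $\phi^{2^{t-1}}$: it plugs the generator $u_1 \otimes \cdots \otimes u_{2^{t-2}}$ and $v_1 \otimes \cdots \otimes v_{2^{t-2}}$ into $\phi^{2^{t-1}}$ in the interleaved order, and extends bilinearly, which is well defined for the same tensor-product reason. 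The main obstacle is thus not any hard estimate but simply being careful about the order of quantifiers in ``extend linearly'' — checking multilinearity slot by slot before invoking the tensor universal property — and I would organize the write-up around that single point, with the Parseval computation doing the work for basis independence.
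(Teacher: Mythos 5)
Your proposal is correct and follows essentially the same approach as the paper: both reduce basis independence to Lemma~\ref{lem:linearisdotproduct}, rewriting the two partial linear functionals as $\dotp{a}{\cdot}$ and $\dotp{b}{\cdot}$ so the defining sum collapses to the basis-independent quantity $\dotp{a}{b}$. For well-definedness you invoke the universal property of the tensor product slot by slot, whereas the paper argues by expanding in a fixed orthonormal basis and appealing to linearity of $\phi$ and $\psi$ — the same content, just phrased more formally in your version.
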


\begin{proof} 
First, extending the definitions of $\phi \ast \psi$ and $A[\phi^{2^{t-1}}]$ linearly to the entire
domain (non-simple tensors) is well defined, since $\phi$ and $\psi$ are linear.  That is, write
each $u_i$ and $v_i$ in terms of some orthonormal basis and expand each tensor in $V_i
\otimes V_i$ also in terms of this basis.  The linearity of $\phi$ and $\psi$ then shows that the
definitions of $\phi \ast \psi$ and $A[\phi^{2^{t-1}}]$ are well defined and linear.  To see basis
independence of $\phi \ast \psi$, by Lemma~\ref{lem:linearisdotproduct} the linear map
$\phi(u_1,\dots,u_{t-1},\cdot) : V_t \rightarrow \mathbb{R}$ equals $\dotp{u'}{\cdot}$ for some
vector $u'$.  Similarly, $\psi(v_1,\dots,v_t,\cdot)$ equals $\dotp{v'}{\cdot}$ for some vector $v'$.
Then
\begin{align*}
    (\phi \ast \psi)(u_1\otimes v_1, \dots, u_{t-1} \otimes v_{t-1}) 
    = \sum_{i=1}^{\dim(V_t)} \dotp{u'}{b_i} \dotp{v'}{b_i}
    = \dotp{u'}{v'}.
\end{align*}
The last equality is valid for any orthonormal basis, since the dot product of $u'$ and $v'$ sums
the product of the $i$th coordinate of $u'$ in the basis $\{b_1,\dots,b_{\dim(V_t)}\}$ with the
$i$th coordinate of $v'$ in the basis $\{b_1,\dots,b_{\dim(V_t)}\}$.
\end{proof} 

\begin{definition}
  For $s \geq 0$ and $V$ a finite dimensional vector space over $\mathbb{R}$, define the vector
  space isomorphism $\Gamma_{V,s} : V^{\otimes 2^s} \rightarrow V^{\otimes 2^s}$ as follows.  If $s = 0$,
  define $\Gamma_{V,0}$ to be the identity map.  If $s \geq 1$, let $\{b_1,\dots,b_{\dim(V)}\}$ be any
  orthonormal basis of $V$ and define for all $(i_1, \dots, i_{2^{s-1}}, j_1, \dots, j_{2^{s-1}})
  \in [\dim(V)]^{2^s}$,
  \begin{align} \label{eq:algdefofgamma}
    \Gamma_{V,s}(b_{i_1} \otimes b_{j_1} \otimes \dots \otimes b_{i_{2^{s-1}} } \otimes b_{j_{2^{s-1}} })
    = b_{j_1} \otimes b_{i_1} \otimes \dots \otimes b_{j_{2^{s-1}} } \otimes b_{i_{2^{s-1}} }.
  \end{align}
  Extend $\Gamma_{V,s}$ linearly to all of $V^{\otimes 2^s}$.
\end{definition}

\begin{remarks}
  $\Gamma_{V,s}$ is a vector space isomorphism since it restricts to a bijection of an orthonormal basis
  to itself.  Also, it is easy to see that $\Gamma_{V,s}$ is well defined and independent of the choice of
  orthonormal basis, since each $b_i$ can be written as a linear combination of an orthonormal basis
  $\{b'_1,\dots,b'_{\dim(V)}\}$ and \eqref{eq:algdefofgamma} can be expanded using linearity.  For
  notational convenience, we will usually drop the subscript $V$ and write $\Gamma_s$ for
  $\Gamma_{V,s}$.
\end{remarks}

\begin{lemma} \label{lem:powersymmetric}
  Let $\phi : V_1 \times \dots \times V_t \rightarrow \mathbb{R}$ be a $t$-linear map, let $0 \leq s
  \leq t-1$, and let $x_1 \in V_1^{\otimes 2^s},\dots,x_{t-s} \in V_{t-s}^{\otimes 2^s}$.  Then
  \begin{align*}
    \phi^{2^s}(x_1,\dots,x_{t-s}) = \phi^{2^s}(\Gamma_s(x_1),\dots,\Gamma_s(x_{t-s})).
  \end{align*}
\end{lemma}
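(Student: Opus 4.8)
The plan is to induct on $s$, using only multilinearity, the definition of the product $\ast$, and the basis-independence of $\ast$ from Lemma~\ref{lem:productbasisindependent}. Both sides of the asserted identity are linear in each $x_i$, so it suffices to treat inputs of the form $x_i = a_i \otimes b_i$ with $a_i, b_i \in V_i^{\otimes 2^{s-1}}$ (this presupposes $s \geq 1$; for $s = 0$ the map $\Gamma_0$ is the identity and there is nothing to prove).

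For the base case $s = 1$, write $x_i = u_i \otimes v_i$ with $u_i, v_i \in V_i$. Unwinding the definition of $\ast$ with an orthonormal basis $\{b_j\}$ of $V_t$,
\[
  \phi^2(x_1,\dots,x_{t-1}) = \sum_j \phi(u_1,\dots,u_{t-1},b_j)\,\phi(v_1,\dots,v_{t-1},b_j),
\]
while $\Gamma_1(u_i \otimes v_i) = v_i \otimes u_i$, so $\phi^2(\Gamma_1 x_1,\dots,\Gamma_1 x_{t-1})$ is the same sum with the two factors of each summand interchanged; the two are equal by commutativity of multiplication in $\mathbb{R}$.

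For the inductive step $s \geq 2$, the structural observation is that under the ``middle split'' $V^{\otimes 2^s} = V^{\otimes 2^{s-1}} \otimes V^{\otimes 2^{s-1}}$ implicit in $\phi^{2^s} = \phi^{2^{s-1}} \ast \phi^{2^{s-1}}$, one has $\Gamma_s(a \otimes b) = \Gamma_{s-1}(a) \otimes \Gamma_{s-1}(b)$ for $a, b \in V^{\otimes 2^{s-1}}$. This is immediate from \eqref{eq:algdefofgamma}: $\Gamma_s$ transposes tensor positions $2m-1$ and $2m$ for $m = 1,\dots,2^{s-1}$, and when $s \geq 2$ the transpositions with $2m \leq 2^{s-1}$ act entirely inside the first block (as $\Gamma_{s-1}$) and the remaining ones entirely inside the second block (again as $\Gamma_{s-1}$). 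I will also use that $\Gamma_{s-1}$ is an involution and, being a bijection of an orthonormal basis onto itself, sends orthonormal bases to orthonormal bases (both clear from its definition). Abbreviating $\psi := \phi^{2^{s-1}}$ and writing $x_i = a_i \otimes b_i$, the definition of $\ast$ gives, for an orthonormal basis $\{c_j\}$ of $V_{t-s+1}^{\otimes 2^{s-1}}$,
\[
  \phi^{2^s}(\Gamma_s x_1,\dots,\Gamma_s x_{t-s})
   = \sum_j \psi(\Gamma_{s-1}a_1,\dots,\Gamma_{s-1}a_{t-s},c_j)\,
            \psi(\Gamma_{s-1}b_1,\dots,\Gamma_{s-1}b_{t-s},c_j).
\]
Applying the inductive hypothesis to $\psi$ with arguments $(a_1,\dots,a_{t-s},\Gamma_{s-1}c_j)$ and using that $\Gamma_{s-1}$ is an involution turns $\psi(\Gamma_{s-1}a_1,\dots,\Gamma_{s-1}a_{t-s},c_j)$ into $\psi(a_1,\dots,a_{t-s},\Gamma_{s-1}c_j)$, and similarly for the $b_i$, so the right-hand side becomes $\sum_j \psi(a_1,\dots,a_{t-s},\Gamma_{s-1}c_j)\,\psi(b_1,\dots,b_{t-s},\Gamma_{s-1}c_j)$. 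Since $\{\Gamma_{s-1}c_j\}_j$ is again an orthonormal basis of $V_{t-s+1}^{\otimes 2^{s-1}}$, Lemma~\ref{lem:productbasisindependent} identifies this sum with $(\psi \ast \psi)(x_1,\dots,x_{t-s}) = \phi^{2^s}(x_1,\dots,x_{t-s})$, completing the induction.

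I expect the only real friction to be the bookkeeping behind $\Gamma_s = \Gamma_{s-1} \otimes \Gamma_{s-1}$: one has to notice that $s = 1$ is genuinely exceptional --- there $\Gamma_1$ swaps the two tensor factors rather than acting blockwise --- and so must be handled as a separate base case, and one has to be careful to feed the inductive hypothesis into the \emph{last} slot of $\psi$, the one that $\ast$ contracts, and then invoke basis-independence, rather than trying to use any symmetry of $\phi$ itself (which is neither assumed nor needed).
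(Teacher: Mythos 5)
Your proof is correct and takes essentially the same inductive strategy as the paper (expand $\phi^{2^s}$ as a contraction of $\phi^{2^{s-1}}$ against an orthonormal basis, apply the inductive hypothesis, and invoke basis-independence from Lemma~\ref{lem:productbasisindependent}). The genuinely new content in your write-up is that you isolate $s=1$ as a separate base case, observing that the blockwise factorization $\Gamma_s(a\otimes b)=\Gamma_{s-1}(a)\otimes\Gamma_{s-1}(b)$ holds only for $s\geq 2$, whereas for $s=1$ the map $\Gamma_1$ swaps the two tensor factors outright. This is a real point: the paper's proof uses the identity $\Gamma_s(x_i)\otimes\Gamma_s(y_i)=\Gamma_{s+1}(x_i\otimes y_i)$ uniformly from the base case $s=0$ onward, but that identity is simply false at $s=0$ (for $x,y\in V$ it would assert $x\otimes y = y\otimes x$); the step from $s=0$ to $s=1$ actually needs the commutativity argument you supply. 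So your version is the more careful one, closing a small gap in the $s=0\to 1$ transition that the paper glosses over. Minor stylistic note: you invert the order --- you start from $\phi^{2^s}(\Gamma_s x_1,\dots)$ and reduce it to $\phi^{2^s}(x_1,\dots)$, whereas the paper goes the other direction --- but that is cosmetic; your use of $\Gamma_{s-1}$ being an involution to move it onto the contraction index $c_j$ and then invoking basis-independence is exactly the right mechanism.
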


\begin{proof} 
By induction on $s$.  The base case is $s = 0$ where $\Gamma_0$ is the identity map. Expand the
definition of $\phi^{2^{s+1}}$ and use induction to obtain
\begin{align*}
  \phi^{2^{s+1}}(x_1 \otimes &y_1,\dots,x_{t-s-1} \otimes y_{t-s-1}) 
  = \sum_{j=1}^{\dim(V_{t-s}^{\otimes 2^s})} 
  \phi^{2^s}(x_1,\dots,x_{t-s-1},b_j) \phi^{2^s}(y_1,\dots,y_{t-s-1},b_j) \\
  &= \sum_{j=1}^{\dim(V_{t-s}^{\otimes 2^s})} 
  \phi^{2^s}\big(\Gamma_s(x_1),\dots,\Gamma_s(x_{t-s-1}),\Gamma_s(b_j)\big)
  \phi^{2^s}\big(\Gamma_s(y_1),\dots,\Gamma_s(y_{t-s-1}),\Gamma_s(b_j)\big).
\end{align*}
But since $\Gamma_s$ is a vector space isomorphism, $\{ \Gamma_s(b_1), \dots,
\Gamma_s(b_{\dim(V_{t-s}^{\otimes 2^{s}})}) \}$ is an orthonormal basis of $V_{t-s}^{\otimes 2^{s}}$.
Thus Lemma~\ref{lem:productbasisindependent} shows that
\begin{align*}
  \sum_{j=1}^{\dim(V_{t-s}^{\otimes 2^s})} 
  &\phi^{2^s}\big(\Gamma_s(x_1),\dots,\Gamma_s(x_{t-s-1}),\Gamma_s(b_j)\big)
  \phi^{2^s}\big(\Gamma_s(y_1),\dots,\Gamma_s(y_{t-s-1}),\Gamma_s(b_j)\big) \\
  &= \phi^{2^{s+1}}\big(\Gamma_s(x_1) \otimes \Gamma_s(y_1), \dots, \Gamma_s(x_{t-s-1}) \otimes
  \Gamma_s(y_{t-s-1})\big)
\end{align*}
Finally, $\Gamma_s(x_i) \otimes \Gamma_s(y_i) = \Gamma_{s+1}(x_i \otimes y_i)$ (write $x_i$ and $y_i$
as linear combinations, expand $\Gamma_{s+1}(x_i \otimes y_i)$ using linearity, and apply
\eqref{eq:algdefofgamma}). Thus $\phi^{2^{s+1}}(x_1 \otimes y_1,\dots,x_{t-s-1} \otimes y_{t-s-1}) =
\phi^{2^{s+1}}(\Gamma_{s+1}(x_1 \otimes y_1), \dots, \Gamma_{s+1}(x_{t-s-1} \otimes y_{t-s-1}))$,
completing the proof.
\end{proof} 

\begin{lemma} \label{lem:powerismatrix}
  Let $V_1,\dots,V_t$ be finite dimensional vector spaces over $\mathbb{R}$.
  If $\phi : V_1 \times \dots \times V_t \rightarrow \mathbb{R}$ is a $t$-linear map, then
  $A[\phi^{2^{t-1}}]$ is a square symmetric real valued matrix.
\end{lemma}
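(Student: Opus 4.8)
The plan is to observe that almost all of the assertion is immediate from the definitions, leaving symmetry as the only substantive point. By construction $A[\phi^{2^{t-1}}]$ is a bilinear map on $V_1^{\otimes 2^{t-2}} \times V_1^{\otimes 2^{t-2}}$ — the same space occupies both slots — so, viewed as a matrix, it is square and real valued, and it is well defined by Lemma~\ref{lem:productbasisindependent}. Thus it remains only to show that $A[\phi^{2^{t-1}}](u,v) = A[\phi^{2^{t-1}}](v,u)$ for all $u,v \in V_1^{\otimes 2^{t-2}}$.

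Since $A[\phi^{2^{t-1}}]$ is bilinear, it suffices to verify this on simple tensors $u = u_1 \otimes \dots \otimes u_{2^{t-2}}$ and $v = v_1 \otimes \dots \otimes v_{2^{t-2}}$ with $u_i,v_i \in V_1$. Unwinding the definition of $A[\phi^{2^{t-1}}]$ gives
\begin{align*}
  A[\phi^{2^{t-1}}](u,v) &= \phi^{2^{t-1}}\big(u_1 \otimes v_1 \otimes \dots \otimes u_{2^{t-2}} \otimes v_{2^{t-2}}\big), \\
  A[\phi^{2^{t-1}}](v,u) &= \phi^{2^{t-1}}\big(v_1 \otimes u_1 \otimes \dots \otimes v_{2^{t-2}} \otimes u_{2^{t-2}}\big),
\end{align*}
and both arguments lie in $V_1^{\otimes 2^{t-1}}$ since $2^{t-1} = 2 \cdot 2^{t-2}$. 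The second argument is exactly the first one with the entries in each adjacent pair of positions $(2i-1,2i)$ interchanged, which by \eqref{eq:algdefofgamma} is precisely $\Gamma_{V_1,t-1}$ applied to the first argument.

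Finally, applying Lemma~\ref{lem:powersymmetric} with $s = t-1$ (so that $t-s=1$ and $\phi^{2^{t-1}}$ is a single-input linear map) gives $\phi^{2^{t-1}}(x) = \phi^{2^{t-1}}(\Gamma_{t-1}(x))$ for every $x \in V_1^{\otimes 2^{t-1}}$; taking $x = u_1 \otimes v_1 \otimes \dots \otimes u_{2^{t-2}} \otimes v_{2^{t-2}}$ yields the desired equality $A[\phi^{2^{t-1}}](u,v) = A[\phi^{2^{t-1}}](v,u)$. The argument has no real obstacle; the only point requiring care is matching up the $2^{t-1}$ tensor slots correctly and recognizing the pair-swap as the isomorphism $\Gamma_{t-1}$, after which Lemma~\ref{lem:powersymmetric} finishes the job.
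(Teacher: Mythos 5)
Your proof is correct and follows essentially the same route as the paper: observe that the codomain structure of $A[\phi^{2^{t-1}}]$ forces it to be a square real matrix, reduce symmetry to the simple-tensor case by bilinearity, and then apply Lemma~\ref{lem:powersymmetric} (with $s=t-1$) together with the pair-swap description of $\Gamma_{t-1}$ in \eqref{eq:algdefofgamma} to show $A[\phi^{2^{t-1}}](u,v)=A[\phi^{2^{t-1}}](v,u)$. The paper presents the same chain of equalities in a single display rather than spelling out the $s=t-1$ specialization, but the underlying argument is identical.
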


\begin{proof} 
Let $\phi : V_1 \times \dots \times V_t \rightarrow \mathbb{R}$ be a $t$-linear map.
$A[\phi^{2^{t-1}}]$ is a bilinear map from $V_1^{\otimes 2^{t-2}} \times V_1^{\otimes 2^{t-2}}
\rightarrow \mathbb{R}$ and so is a square matrix of dimension $\dim(V_1)^{2^{t-2}}$.
Lemma~\ref{lem:powersymmetric} shows that $A[\phi^{2^{t-1}}]$ is a symmetric matrix, since
\begin{align*}
  A[\phi^{2^{t-1}}](x_1 \otimes \dots \otimes x_{2^{t-2}}, y_1 \otimes \dots \otimes y_{2^{t-2}})
  &= \phi^{2^{t-1}}(x_1 \otimes y_1 \otimes \dots \otimes x_{2^{t-2}} \otimes y_{2^{t-2}}) \\
  &= \phi^{2^{t-1}}(\Gamma(x_1 \otimes y_1 \otimes \dots \otimes x_{2^{t-2}} \otimes y_{2^{t-2}})) \\
  &= \phi^{2^{t-1}}(y_1 \otimes x_1 \otimes \dots \otimes y_{2^{t-2}} \otimes x_{2^{t-2}}) \\
  &= A[\phi^{2^{t-1}}](y_1 \otimes \dots \otimes y_{2^{t-2}},x_1\otimes \dots \otimes x_{2^{t-2}}).
\end{align*}
The above equation is valid for all $x_i,y_i \in V_1$, in particular for all basis elements of $V_1$
which implies that $A[\phi^{2^{t-1}}](w,z) = A[\phi^{2^{t-1}}](z,w)$ for all basis vectors $w,z$ of
$V_1^{\otimes 2^{t-2}}$.  Thus $A[\phi^{2^{t-1}}]$ is a square symmetric real-valued matrix.
\end{proof} 

\begin{lemma} \label{lem:singlepowerupperbound}
  Let $\phi : V_1 \times \dots \times V_t \rightarrow \mathbb{R}$ be a $t$-linear map and let
  $x_1 \in V_1,\dots,x_t \in V_t$ be unit length vectors. Then
  \begin{align*}
    \left| \phi(x_1,\dots,x_t) \right|^2 \leq \left| \phi^2(x_1 \otimes x_1, \dots, x_{t-1}
    \otimes x_{t-1}) \right|.
  \end{align*}
\end{lemma}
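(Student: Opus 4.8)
The plan is to unwind the definition of $\phi^2 = \phi \ast \phi$ and recognize the right-hand side as the squared norm of a single vector, after which the inequality becomes Cauchy--Schwarz. Concretely, fix unit vectors $x_1 \in V_1, \dots, x_t \in V_t$. By Lemma~\ref{lem:linearisdotproduct}, the linear functional $\phi(x_1,\dots,x_{t-1},\cdot) : V_t \to \mathbb{R}$ equals $\dotp{w}{\cdot}$ for a unique vector $w \in V_t$; note $\phi(x_1,\dots,x_t) = \dotp{w}{x_t}$. Now I would compute, using the defining formula for $\phi \ast \phi$ with any orthonormal basis $\{b_j\}$ of $V_t$,
\begin{align*}
  \phi^2(x_1 \otimes x_1, \dots, x_{t-1} \otimes x_{t-1})
  = \sum_{j} \phi(x_1,\dots,x_{t-1},b_j)^2
  = \sum_j \dotp{w}{b_j}^2
  = \left\lVert w \right\rVert^2.
\end{align*}
In particular this quantity is nonnegative, so the absolute value on the right-hand side is harmless.

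Then the inequality to prove reads $|\dotp{w}{x_t}|^2 \le \lVert w \rVert^2$, which is immediate from Cauchy--Schwarz together with $\lVert x_t \rVert = 1$:
\begin{align*}
  \left| \phi(x_1,\dots,x_t) \right|^2 = \dotp{w}{x_t}^2 \le \left\lVert w \right\rVert^2 \left\lVert x_t \right\rVert^2 = \left\lVert w \right\rVert^2 = \left| \phi^2(x_1 \otimes x_1, \dots, x_{t-1} \otimes x_{t-1}) \right|.
\end{align*}
This completes the argument.

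There is essentially no hard part here: the only subtlety is making sure that passing from $\phi^2$ evaluated on the simple tensors $x_i \otimes x_i$ to the sum $\sum_j \phi(x_1,\dots,x_{t-1},b_j)^2$ is a legitimate direct application of the definition of $\phi \ast \phi$ (it is, since the definition evaluates $\phi \ast \psi$ on simple tensors $u_i \otimes v_i$ exactly by this sum with $u_i = v_i = x_i$ and $\psi = \phi$), and that the choice of orthonormal basis is immaterial, which is guaranteed by Lemma~\ref{lem:productbasisindependent}. The key conceptual point worth recording is that $\phi^2(x_1\otimes x_1,\dots,x_{t-1}\otimes x_{t-1})$ is always nonnegative and equals the squared length of the Riesz representative of the linear functional $\phi(x_1,\dots,x_{t-1},\cdot)$; the stated inequality is then just the statement that a dot product against a unit vector is bounded by the length of that representative.
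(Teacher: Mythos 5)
Your proof is correct and follows essentially the same approach as the paper: use Lemma~\ref{lem:linearisdotproduct} to obtain the Riesz representative $w$ of $\phi(x_1,\dots,x_{t-1},\cdot)$, compute $\phi^2(x_1\otimes x_1,\dots,x_{t-1}\otimes x_{t-1}) = \lVert w\rVert^2$, and conclude by bounding $\dotp{w}{x_t}^2$. The paper phrases the final step as ``$\dotp{w}{\cdot}$ is maximized over the unit ball at $w/\lVert w\rVert$'' rather than invoking Cauchy--Schwarz by name, but this is the same fact.
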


\begin{proof} 
Consider the linear map $\phi(x_1, \dots, x_{t-1},\cdot)$ which is a linear map from $V_{t}$ to
$\mathbb{R}$.  By Lemma~\ref{lem:linearisdotproduct}, there exists a vector $w \in V_t$ such that
$\phi(x_1,\dots,x_{t-1},\cdot) = \dotp{w}{\cdot}$.  Now expand out the definition of $\phi^2$:
\begin{align*}
  \phi^2(x_1 \otimes x_1, \dots, x_{t-1} \otimes x_{t-1})
  = \sum_j \left| \phi(x_1, \dots, x_{t-1}, b_j)\right|^2 
  = \sum_j \left| \dotp{w}{b_j} \right|^2 = \dotp{w}{w}
\end{align*}
where the last equality is because $\{b_j\}$ is an orthonormal basis of $V_{t}$. Since
$\left\lVert w \right\rVert = \sqrt{\dotp{w}{w}}$,
\begin{align*}
  \left| \phi^2(x_1 \otimes x_1, \dots, x_{t-1} \otimes x_{t-1})\right| = \left| \dotp{w}{w} \right|
  = \left| \dotp{w}{\frac{w}{\left\lVert w \right\rVert}} \right|^2.
\end{align*}
But since $x_t$ is unit length and $\dotp{w}{\cdot}$ is maximized over the unit ball at vectors
parallel to $w$ (so maximized at $w/\left\lVert w \right\rVert$), $\left|
\dotp{w}{\frac{w}{\left\lVert w \right\rVert}} \right| \geq \left| \dotp{w}{x_t} \right|$. Thus
\begin{align*}
  \left| \phi^2(x_1 \otimes x_1, \dots, x_{t-1} \otimes x_{t-1})\right| = \left|
  \dotp{w}{\frac{w}{\left\lVert w \right\rVert}} \right|^2 \geq \left| \dotp{w}{x_t} \right|^2
  = \left| \phi(x_1,\dots,x_t) \right|^2.
\end{align*}
The last equality used the definition of $w$, that $\phi(x_1,\dots,x_{t-1},\cdot) =
\dotp{w}{\cdot}$.
\end{proof} 

\begin{lemma} \label{lem:powerupperbound}
  Let $\phi : V_1 \times \dots \times V_t \rightarrow \mathbb{R}$ be a $t$-linear map and let
  $x_1 \in V_1,\dots,x_t \in V_t$ be unit length vectors.  Then for $0 \leq s \leq t-1$,
  \begin{align*}
    \left| \phi(x_1,\dots,x_t) \right|^{2^s} \leq \left| \phi^{2^s}(
    \underbrace{x_1 \otimes \dots \otimes x_1}_{2^{s}}, \dots,
    \underbrace{x_{t-s} \otimes \dots \otimes x_{t-s}}_{2^{s}}
    ) \right|
  \end{align*}
  which implies that
  \begin{align*}
    \left| \phi(x_1,\dots,x_t) \right|^{2^{t-1}} \leq \left| A[\phi^{2^{t-1}}](
    \underbrace{x_1 \otimes \dots \otimes x_1}_{2^{t-2}},
    \underbrace{x_1 \otimes \dots \otimes x_1}_{2^{t-2}}
    ) \right|.
  \end{align*}
\end{lemma}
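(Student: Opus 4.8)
The plan is to prove the first inequality by induction on $s$, applying Lemma~\ref{lem:singlepowerupperbound} at each step, and then to read off the displayed consequence for $A[\phi^{2^{t-1}}]$ directly from its definition. For the base case $s=0$ we have $\phi^{2^0}=\phi$, so both sides equal $|\phi(x_1,\dots,x_t)|$ and there is nothing to prove.

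For the inductive step I would fix $0 \le s \le t-2$, assume the inequality for $s$, and square it to get
\[
|\phi(x_1,\dots,x_t)|^{2^{s+1}} \;\le\; \bigl|\phi^{2^s}\bigl(x_1^{\otimes 2^s},\dots,x_{t-s}^{\otimes 2^s}\bigr)\bigr|^2 .
\]
Then set $\psi := \phi^{2^s}$, a $(t-s)$-linear map on $V_1^{\otimes 2^s}\times\dots\times V_{t-s}^{\otimes 2^s}$, and $z_i := x_i^{\otimes 2^s}$ for $1 \le i \le t-s$. Since each $x_i$ is a unit vector, each $z_i$ is a tensor product of unit vectors and so has unit length (as recorded at the start of this section); hence Lemma~\ref{lem:singlepowerupperbound} applies to $\psi$ and the $z_i$, giving $|\psi(z_1,\dots,z_{t-s})|^2 \le |\psi^2(z_1\otimes z_1,\dots,z_{t-s-1}\otimes z_{t-s-1})|$. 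Finally I would use the identities $\psi^2 = (\phi^{2^s})^2 = \phi^{2^{s+1}}$ (by definition) and $z_i \otimes z_i = x_i^{\otimes 2^{s+1}}$; substituting these into the previous two displays gives exactly the inequality for $s+1$, completing the induction.

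To obtain the second displayed inequality I would specialize the first to $s=t-1$, where it reads $|\phi(x_1,\dots,x_t)|^{2^{t-1}} \le |\phi^{2^{t-1}}(x_1^{\otimes 2^{t-1}})|$. Grouping the $2^{t-1}$ copies of $x_1$ into $2^{t-2}$ consecutive pairs and comparing with the definition of $A[\phi^{2^{t-1}}]$ (taking $u_j=v_j=x_1$ for every $j$) shows that $\phi^{2^{t-1}}(x_1^{\otimes 2^{t-1}}) = A[\phi^{2^{t-1}}](x_1^{\otimes 2^{t-2}},x_1^{\otimes 2^{t-2}})$, which is the claim. The argument is essentially bookkeeping; the only points needing any care are checking that the inputs $z_i$ to Lemma~\ref{lem:singlepowerupperbound} remain unit length and that the indices line up so that $\psi=\phi^{2^s}$ is at least bilinear precisely when that lemma is invoked — which forces $s\le t-2$, exactly the range covered by the inductive step.
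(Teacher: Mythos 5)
Your proof is correct and follows the same route as the paper: induction on $s$ with base case $s=0$ trivial, inductive step via Lemma~\ref{lem:singlepowerupperbound} applied to $\psi = \phi^{2^s}$ and $z_i = x_i^{\otimes 2^s}$ (using that tensor products of unit vectors are unit vectors), and the final inequality read off by specializing to $s=t-1$ and matching with the definition of $A[\phi^{2^{t-1}}]$. The paper states the induction step tersely (``follows from Lemma~\ref{lem:singlepowerupperbound}'') whereas you spell out the bookkeeping, including the correct observation that the step only needs to cover $0 \le s \le t-2$, which keeps $\psi$ at least bilinear; this is a faithful expansion rather than a different argument.
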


\begin{proof} 
By induction on $s$.  The base case is $s = 0$ where both sides are equal and the induction step
follows from Lemma~\ref{lem:singlepowerupperbound}.  By definition of $A[\phi^{2^{t-1}}]$,
\begin{align*}
  \left| A[\phi^{2^{t-1}}](
  \underbrace{x_1 \otimes \dots \otimes x_1}_{2^{t-2}},
  \underbrace{x_1 \otimes \dots \otimes x_1}_{2^{t-2}}
  ) \right| =
  \left| \phi^{2^{t-1}}(
  \underbrace{x_1 \otimes \dots \otimes x_1}_{2^{t-1}}
  ) \right|,
\end{align*}
completing the proof.
\end{proof} 

\begin{lemma} \label{lem:largesteigpower}
  Let $V_1,\dots,V_t$ be vector spaces over $\mathbb{R}$ and let $\phi : V_1
  \times \dots \times V_t \rightarrow \mathbb{R}$ be a $t$-linear map.  Then $\left\lVert \phi
  \right\rVert^{2^{t-1}} \leq \lambda_1(A[\phi^{2^{t-1}}])$.
\end{lemma}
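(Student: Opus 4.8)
The plan is to reduce the claim, via Lemmas~\ref{lem:powerismatrix} and~\ref{lem:powerupperbound}, to the Rayleigh quotient bound for a real symmetric matrix. I would start from the definition of the spectral norm, $\lVert\phi\rVert = \sup\{|\phi(x_1,\dots,x_t)| : x_i\in V_i,\ \lVert x_i\rVert = 1\}$, and note that since $u\mapsto u^{2^{t-1}}$ is continuous and nondecreasing on $[0,\infty)$, raising to the power $2^{t-1}$ commutes with this supremum. So it will be enough to show $|\phi(x_1,\dots,x_t)|^{2^{t-1}} \le \lambda_1(A[\phi^{2^{t-1}}])$ for an arbitrary fixed choice of unit vectors $x_1,\dots,x_t$.

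Next, fixing such $x_1,\dots,x_t$, I would set $z := x_1\otimes\cdots\otimes x_1 \in V_1^{\otimes 2^{t-2}}$ (with $2^{t-2}$ factors); since $x_1$ has unit length and the tensor product of unit vectors is again of unit length, $z$ is a unit vector. Lemma~\ref{lem:powerupperbound} with $s = t-1$ then gives $|\phi(x_1,\dots,x_t)|^{2^{t-1}} \le |A[\phi^{2^{t-1}}](z,z)|$, and Lemma~\ref{lem:powerismatrix} tells us that $M := A[\phi^{2^{t-1}}]$ is a real symmetric matrix. Diagonalizing $M$ in an orthonormal eigenbasis and expanding $z$ in it, together with $\lVert z\rVert = 1$, gives $|M(z,z)| \le \lambda_1(M)$, since $\lambda_1(M)$ is the largest eigenvalue of $M$ in absolute value, i.e.\ the operator norm of $M$. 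Taking the supremum over unit $x_1,\dots,x_t$ then finishes the argument.

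The only step carrying real content beyond the two cited lemmas is this last inequality $|M(z,z)| \le \lambda_1(M)$, and I expect the (modest) obstacle to be pinning down that $\lambda_1$ of a symmetric matrix should here be read as its operator norm — equivalently, checking that $M = A[\phi^{2^{t-1}}]$ is positive semidefinite, so that $\lambda_1(M)$ is literally the top eigenvalue. I would get that positive semidefiniteness by unwinding the recursion $\phi^{2^{t-1}} = \phi^{2^{t-2}}\ast\phi^{2^{t-2}}$: the definition of $\ast$ writes $\phi^{2^{t-1}}(a\otimes b) = a^\top(D^\top D)\,b$, where $D$ is the matrix of $\phi^{2^{t-2}}$, and combining this with the pair-swap symmetry from Lemma~\ref{lem:powersymmetric} exhibits $M$ as a Gram-type matrix. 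Everything else — the supremum bookkeeping, the fact that $z$ is a unit tensor, and the Rayleigh inequality itself — is routine.
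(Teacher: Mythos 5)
The core of your argument matches the paper's: take unit vectors $x_1,\dots,x_t$ achieving the supremum, set $z := x_1\otimes\cdots\otimes x_1 \in V_1^{\otimes 2^{t-2}}$ (unit length), apply Lemma~\ref{lem:powerupperbound} to get $\lVert\phi\rVert^{2^{t-1}} \le |A[\phi^{2^{t-1}}](z,z)|$, and bound the quadratic form by the spectral norm of the symmetric matrix $A[\phi^{2^{t-1}}]$ (Lemma~\ref{lem:powerismatrix}). That is exactly the paper's proof and it is correct.

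The detour at the end is where you go wrong. First, it is unnecessary: throughout the paper (see Lemma~\ref{lem:matrixcloseone}, which speaks of the ``eigenvector corresponding to the largest eigenvalue in absolute value''), $\lambda_1(M)$ for a real symmetric $M$ is already taken to mean the largest eigenvalue in absolute value, i.e.\ the operator norm, so $|M(z,z)|\le\lambda_1(M)\,\lVert z\rVert^2 = \lambda_1(M)$ needs no positivity hypothesis. Second, the positive semidefiniteness argument you sketch does not work for $t\ge 3$. Your identity $\phi^{2^{t-1}}(a\otimes b) = a^\top D D^\top b$ (with $a,b\in V_1^{\otimes 2^{t-2}}$ and $D$ the matrix of the bilinear map $\phi^{2^{t-2}}$) shows that the \emph{block} reshaping of the linear form $\phi^{2^{t-1}}$ into a bilinear form on $V_1^{\otimes 2^{t-2}}\times V_1^{\otimes 2^{t-2}}$ is a Gram matrix. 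But $A[\phi^{2^{t-1}}]$ is defined via the \emph{interleaved} reshaping $A[\phi^{2^{t-1}}](u_1\otimes\cdots\otimes u_{2^{t-2}},\,v_1\otimes\cdots\otimes v_{2^{t-2}}) = \phi^{2^{t-1}}(u_1\otimes v_1\otimes u_2\otimes v_2\otimes\cdots)$, which is not the same map. The two coincide only when $2^{t-2}=1$, i.e.\ $t=2$; for $t\ge 3$ the interleaving destroys the Gram structure, and Lemma~\ref{lem:powersymmetric} gives you only symmetry of $A[\phi^{2^{t-1}}]$, not positive semidefiniteness. Indeed the paper states explicitly at the end of Section~\ref{sec:nonregular-cycle-to-eig} that for partitions into more than two parts it is not known whether $A[\tau_{\vec\pi}^{2^{t-1}}]$ is positive semidefinite. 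So: keep the first two-thirds, delete the PSD step, and simply read $\lambda_1$ as the spectral norm.
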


\begin{proof} 
Pick $x_1,\dots,x_t$ unit length vectors to maximize $\phi$, so $\phi(x_1,\dots,x_t) = \left\lVert
\phi \right\rVert$.  Then Lemma~\ref{lem:powerupperbound} shows that
\begin{align*}
  \left\lVert \phi \right\rVert^{2^{t-1}} = \left| \phi(x_1,\dots,x_t) \right|^{2^{t-1}}
  \leq \left| A[\phi^{2^{t-1}}](
  \underbrace{x_1 \otimes \dots \otimes x_1}_{2^{t-2}},
  \underbrace{x_1 \otimes \dots \otimes x_1}_{2^{t-2}}
  ) \right|
\end{align*}
Since $x_1 \otimes \dots \otimes x_1$ is unit length, the above expression is upper bounded by the
spectral norm of $A[\phi^{2^{t-1}}]$.
\end{proof} 

\begin{lemma} \label{lem:matrixcloseone}
  Let $\{M_r\}_{r\rightarrow\infty}$ be a sequence of square symmetric real-valued matrices with
  dimension going to infinity where $\lambda_2(M_r) = o(\lambda_1(M_r))$.  Let $u_r$ be a unit
  length eigenvector corresponding to the largest eigenvalue in absolute value of $M_r$.  If $\{ x_r
  \}$ is a sequence of unit length vectors such that $\left| x_r^T M_r x_r \right| = (1+o(1))
  \lambda_1(M_r)$, then
  \begin{align*}
    \left\lVert u_r - x_r \right\rVert = o(1).
  \end{align*}
  Consequently, for any unit length sequence $\{y_r\}$ where each $y_r$ is perpendicular to $x_r$,
  \begin{align*}
    \left| y_r^T M_r y_r\right| = o(\lambda_1(M_r)).
  \end{align*}
\end{lemma}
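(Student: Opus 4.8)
The plan is to diagonalize $M_r$ and track how much of $x_r$ lies along the top eigenvector. Fix for each $r$ an orthonormal eigenbasis $u_r = w_1^{(r)}, w_2^{(r)}, \dots$ of $M_r$ with eigenvalues $\mu_1^{(r)}, \mu_2^{(r)}, \dots$ ordered so that $|\mu_1^{(r)}| = \lambda_1(M_r)$ and $|\mu_j^{(r)}| \le \lambda_2(M_r)$ for all $j \ge 2$; note that $\lambda_2(M_r) = o(\lambda_1(M_r))$ forces the largest eigenvalue to be simple once $r$ is large, so $u_r$ is determined up to sign. Write $x_r = c_r u_r + z_r$ with $z_r \perp u_r$ and $c_r^2 + \lVert z_r \rVert^2 = 1$.

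First I would bound the quadratic form on the orthogonal complement of $u_r$: since $z_r$ lies in the span of $w_2^{(r)}, w_3^{(r)}, \dots$, expanding in that basis gives $|z_r^T M_r z_r| \le \lambda_2(M_r)\lVert z_r\rVert^2 \le \lambda_2(M_r)$. Hence
\[
  |x_r^T M_r x_r| = \bigl| c_r^2 \mu_1^{(r)} + z_r^T M_r z_r \bigr| \le c_r^2 \lambda_1(M_r) + \lambda_2(M_r).
\]
Comparing with the hypothesis $|x_r^T M_r x_r| = (1+o(1))\lambda_1(M_r)$ and dividing by $\lambda_1(M_r)$ yields $c_r^2 \ge 1 - o(1)$, so $c_r^2 \to 1$ and $\lVert z_r\rVert^2 = 1 - c_r^2 \to 0$. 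Choosing the sign of $u_r$ so that $c_r \ge 0$, we get $c_r \to 1$ and $\lVert u_r - x_r\rVert^2 = 2 - 2c_r \to 0$, which is the first claim.

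For the consequence, I would take a unit vector $y_r \perp x_r$ and write $y_r = a_r u_r + v_r$ with $v_r \perp u_r$. From $0 = y_r^T x_r = c_r a_r + v_r^T z_r$ and Cauchy--Schwarz, $|a_r| = |v_r^T z_r| / c_r \le \lVert z_r\rVert / c_r \to 0$. Then, exactly as before, $|v_r^T M_r v_r| \le \lambda_2(M_r)$, so
\[
  |y_r^T M_r y_r| = \bigl| a_r^2 \mu_1^{(r)} + v_r^T M_r v_r \bigr| \le a_r^2 \lambda_1(M_r) + \lambda_2(M_r) = o(\lambda_1(M_r)),
\]
since $a_r^2 \to 0$ and $\lambda_2(M_r) = o(\lambda_1(M_r))$.

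There is no deep obstacle here; the only points demanding care are the observation that the hypothesis implicitly makes the largest eigenvalue simple (so that $u_r$ is meaningful up to sign), and the bookkeeping of the sign of $u_r$ — one must pick it compatibly with $c_r$ so that $\lVert u_r - x_r\rVert$, rather than $\lVert u_r + x_r\rVert$, tends to zero. Everything else is a routine eigenbasis estimate.
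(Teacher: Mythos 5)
Your proof is correct and follows essentially the same strategy as the paper's: decompose $x_r$ (and later $y_r$) into a component along $u_r$ plus a perpendicular remainder, bound the quadratic form on the orthogonal complement of $u_r$ by $\lambda_2(M_r)$, and conclude that the coefficient along $u_r$ tends to $1$ (resp.\ $0$). The only differences are cosmetic: you work in a full eigenbasis and handle the sign of $u_r$ and the simplicity of the top eigenvalue explicitly, whereas the paper bounds $|v^T M v|$ via the spectral norm of $M - \lambda_1 uu^T$ and leaves the sign convention implicit.
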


\begin{proof} 
Throughout this proof, the subscript $r$ is dropped; all terms $o(\cdot)$ should be interpreted as
$r \rightarrow \infty$.  This exact statement was proved by Chung, Graham, and
Wilson~\cite{qsi-chung89}, although they don't clearly state it as such.  We give a proof here for
completeness using slightly different language but the same proof idea: if $x$ projected onto
$u^{\perp}$ is too big then the second largest eigenvalue is too big.  Write $x = \alpha v + \beta
u$ where $v$ is a unit length vector perpendicular to $u$ and $\alpha, \beta \in \mathbb{C}$ and
$\alpha^2 + \beta^2 = 1$ (since $u$ is an eigenvector it might have complex entries).  Let
$\phi(x,y) = x^T M y$ be the bilinear map corresponding to $M$.  Since $u^T M v = \lambda_1 u^T v =
\lambda_1 \dotp{u}{v} = 0$, we have $\phi(u,v) = 0$.
This implies that
\begin{align*}
  \phi(x,x) &= \phi(\alpha v + \beta u, \alpha v + \beta u)
  = \alpha^2 \phi(v,v) + \beta^2 \phi(u,u) + 2\alpha\beta \phi(u,v) \\
  &= \alpha^2 \phi(v,v) + \beta^2 \phi(u,u).
\end{align*}
The second largest eigenvalue of $M$ is the largest eigenvalue of $M - \lambda_1(M) u u^T$ which is the
spectral norm of $M - \lambda_1(M) uu^T$. Thus
\begin{align} \label{eq:matrixcloseonesecondbound}
  |\phi(v,v)| = |v^T M v| = |v^T (M - \lambda_1(M) uu^T) v| \leq \lambda_2(M).
\end{align}
Using that $\phi(u,u) = \lambda_1(M)$ and the triangle inequality, we obtain
\begin{align} \label{eq:matrixcloseonealphabeta}
  \left| \phi(x,x) \right| \leq \alpha^2 \lambda_2(M) + \beta^2 \lambda_1(M).
\end{align}
Since $\alpha^2 + \beta^2 = 1$, $|\alpha|$ and $|\beta|$ are between zero and one.  Combining this
with \eqref{eq:matrixcloseonealphabeta} and
$\left| \phi(x,x) \right| =(1+o(1))\lambda_1(M)$ and $\lambda_2(M) = o(\lambda_1(M))$, we must have
$|\beta| = 1 + o(1)$ which in turn implies that $|\alpha| = o(1)$.  Consequently,
\begin{align*}
  \left\lVert u - x \right\rVert^2 = \dotp{u-x}{u-x} = \dotp{u}{u} + \dotp{x}{x} - 2\dotp{u}{x} 
  = 2 - 2\beta = o(1).
\end{align*}

Now consider some $y$ perpendicular to $x$ and similarly to the above, write $y = \gamma w + \delta
u$ for some unit length vector $w$ perpendicular to $u$ and $\gamma,\delta \in \mathbb{C}$ with
$\gamma^2 + \delta^2 = 1$.  Then
\begin{align*}
  \phi(y,y) = \phi(\gamma w + \delta u, \gamma w + \delta u) = \gamma^2 \phi(w,w) + \delta^2 \phi(u,u)
\end{align*}
and as in \eqref{eq:matrixcloseonesecondbound}, we have $\left| \phi(w,w) \right| \leq
\lambda_2(M)$.  Thus
\begin{align*}
  \left| \phi(y,y)\right| \leq \gamma^2  \lambda_2(M) + \delta^2  \lambda_1(M).
\end{align*}
We want to conclude that the above expression is $o(\lambda_1(M))$.
Since $\lambda_2(M) = o(\lambda_1(M))$, we must prove that $\left|\delta\right| = o(1)$ to complete
the proof.
\begin{align*}
  \delta = \dotp{y}{u} = \dotp{y}{\frac{x - \alpha v}{\beta}} = \frac{1}{\beta} \big( \dotp{y}{x}
  - \alpha \dotp{y}{v} \big) = \frac{-\alpha \dotp{y}{v}}{\beta}.
\end{align*}
But $\left|\alpha\right| = o(1)$, $\left| \beta \right| = 1 +
o(1)$, and $\lVert y \rVert = \lVert v \rVert = 1$ so $\left| \delta \right| = o(1)$ as required.
\end{proof} 

\begin{lemma} \label{lem:defofJ}
  Let $J : V_1 \times \dots \times V_t \rightarrow \mathbb{R}$ be the all-ones map and let
  $\vec{1}_i$ be the all-ones vector in $V_i$.  Then for all $x_1,\dots,x_t$ with $x_i \in V_i$,
  \begin{align} \label{eq:Jisdotproduct}
    J(x_1,\dots,x_t) = \dotp{\vec{1}_1}{x_1} \cdots \dotp{\vec{1}_t}{x_t}.
  \end{align}
\end{lemma}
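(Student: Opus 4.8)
The plan is a direct expansion in the standard basis. By definition the all-ones $t$-linear map $J$ is the $t$-linear map taking the value $1$ on every tuple of standard basis vectors, i.e.\ $J(e_{i_1},\dots,e_{i_t}) = 1$ for all standard basis vectors $e_{i_j} \in V_j$. First I would write each input vector in coordinates, $x_j = \sum_i (x_j)_i\, e_i$, and then use the $t$-linearity of $J$ to pull the (finitely many) sums and scalars out in front:
\[
  J(x_1,\dots,x_t) = \sum_{i_1,\dots,i_t} (x_1)_{i_1}\cdots (x_t)_{i_t}\, J(e_{i_1},\dots,e_{i_t}).
\]

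Next, since each coefficient $J(e_{i_1},\dots,e_{i_t})$ equals $1$, the right-hand side collapses to $\sum_{i_1,\dots,i_t}\prod_{j=1}^t (x_j)_{i_j}$, and distributivity of the finite product over the finite sums factors this as $\prod_{j=1}^t\bigl(\sum_i (x_j)_i\bigr)$. Finally I would observe that $\sum_i (x_j)_i = \dotp{\vec{1}_j}{x_j}$, since the dot product of the all-ones vector with any vector is precisely the sum of that vector's coordinates; reassembling the factors yields \eqref{eq:Jisdotproduct}.

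There is essentially no obstacle here: the only points requiring care are that all the index sets are finite (so that reordering and factoring the sums is legitimate) and that one is using the intended convention for the all-ones map, namely value $1$ on all tuples of standard basis vectors rather than some matrix-flavored interpretation. As an alternative one could argue by induction on $t$ using Lemma~\ref{lem:linearisdotproduct}: for fixed $x_2,\dots,x_t$ the map $x_1 \mapsto J(x_1,\dots,x_t)$ is linear, hence equal to $\dotp{w}{\cdot}$ for some $w$, and a one-line computation of the coordinates of $w$ (each equal to $\dotp{\vec{1}_2}{x_2}\cdots\dotp{\vec{1}_t}{x_t}$, independent of the coordinate) identifies $w$ with $\bigl(\prod_{j=2}^t\dotp{\vec{1}_j}{x_j}\bigr)\vec{1}_1$, whence the claim by the inductive hypothesis. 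But the one-shot basis expansion above is shorter and more transparent, so that is the route I would take.
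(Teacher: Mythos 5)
Your proof is correct and is essentially the same as the paper's: the paper verifies the identity on tuples of standard basis vectors and then invokes multilinearity of both sides, which is exactly what your explicit coordinate expansion and factoring of the sums accomplishes, just written out in more detail.
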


\begin{proof} 
If $x_1,\dots,x_t$ are standard basis vectors, then the left and right hand side of
\eqref{eq:Jisdotproduct} are the same.  By linearity, \eqref{eq:Jisdotproduct} is then the same for
all $x_1,\dots,x_t$.
\end{proof} 

\subsection{Proof of Proposition~\ref{prop:algebraicfacts}} 

\begin{proof}[Proof of Proposition~\ref{prop:algebraicfacts}] 
Again throughout this proof, the subscript $r$ is dropped; all terms $o(\cdot)$ should be
interpreted as $r \rightarrow \infty$.  Let $\hat{1}$ denote the all-ones vector scaled to unit
length in the appropriate vector space.  Pick an ordering ${\vec{\pi}} = (k_1,\dots,k_t)$ of $\pi$.
The definition of spectral norm is independent of the choice of the ordering for the entries of
${\vec{\pi}}$, so $\left\lVert \psi_{\vec{\pi}} - qJ_{\vec{\pi}} \right\rVert$ is the same for all
orderings.  Let $w_1,\dots,w_t$ be unit length vectors where $(\psi_{\vec{\pi}} -
qJ_{\vec{\pi}})(w_1,\dots,w_t) = \left\lVert \psi_{\vec{\pi}} - qJ_{\vec{\pi}} \right\rVert$ and
write $w_i = \alpha_i y_i + \beta_i \hat{1}$ where $y_i$ is a unit length vector perpendicular to
the all-ones vector and $\alpha_i,\beta_i \in \mathbb{R}$ with $\alpha_i^2 + \beta_i^2 = 1$.  Then
\begin{align}
  \left\lVert \psi_{\vec{\pi}} - qJ_{\vec{\pi}} \right\rVert &= (\psi_{\vec{\pi}} - qJ_{\vec{\pi}})(w_1,\dots,w_t) 
  = (\psi_{\vec{\pi}} - qJ_{\vec{\pi}})(\alpha_1 y_1 + \beta_1 \hat{1}, \dots,
  \alpha_t y_t + \beta_t \hat{1})  \nonumber \\
  &= \psi_{\vec{\pi}}(\alpha_1 y_1 + \beta_1 \hat{1}, \dots, \alpha_t y_t + \beta_t \hat{1})
  - q\dim(V_r)^{k/2} \prod \beta_i. \label{eq:algfactsfirstexpansion}
\end{align}
The last equality used that $y_i$ is perpendicular to $\hat{1}$, so Lemma~\ref{lem:defofJ} implies
that if $y_i$ appears as input to $J_{\vec{\pi}}$ then the outcome is zero no matter what the other
vectors are.  Thus the only non-zero term involving $J_{\vec{\pi}}$ is
$J_{\vec{\pi}}(\hat{1},\dots,\hat{1}) = \dim(V_r)^{k/2}$.  Note that $\psi(\hat{1},\dots,\hat{1}) =
\psi_{\vec{\pi}}(\hat{1},\dots,\hat{1})$ since the all-ones vector scaled to unit length in
$V^{\otimes k_i}$ is the tensor product of the all-ones vector scaled to unit length in $V$.
Inserting $q = \dim(V_r)^{-k/2} \psi_{\vec{\pi}}(\hat{1},\dots,\hat{1})$ in
\eqref{eq:algfactsfirstexpansion}, we obtain
\begin{align} \label{eq:boundspectral}
  \left\lVert \psi_{\vec{\pi}} - qJ_{\vec{\pi}} \right\rVert =
    \psi_{\vec{\pi}}(\alpha_1 y_1 + \beta_1 \hat{1}, \dots, \alpha_t y_t + \beta_t \hat{1}) - 
    \left(\prod \beta_i\right)\psi_{\vec{\pi}}(\hat{1},\dots,\hat{1}).
\end{align}
Now consider expanding $\psi_{\vec{\pi}}$ in \eqref{eq:boundspectral} using linearity; the term
$(\prod \beta_i) \psi_{\vec{\pi}}(\hat{1},\dots,\hat{1})$ cancels, so all terms include at least one
$y_i$.  We claim that each of these terms is small; the following claim finishes the proof, since
$\left\lVert \psi_{\vec{\pi}} - qJ_{\vec{\pi}} \right\rVert$ is the sum of terms each of which
$o(\psi(\hat{1},\dots,\hat{1}))$.

\medskip 

\noindent \textit{Claim}: If $z_1,\dots,z_{i-1},z_{i+1},\dots,z_t$ are unit length vectors, then
\begin{align*}
  \left| \psi_{\vec{\pi}}(z_1,\dots,z_{i-1},y_i,z_{i+1},\dots,z_t)\right| =
  o(\psi(\hat{1},\dots,\hat{1})).
\end{align*}

\noindent\textit{Proof.} Change the ordering of ${\vec{\pi}}$ to an ordering ${\vec{\pi}}'$ that differs from
${\vec{\pi}}$ by swapping $1$ and $i$.  Since $\psi$ is symmetric,
\begin{align} \label{eq:algfactreorderpi}
  \psi_{\vec{\pi}}(z_1,\dots,z_{i-1},y_i,z_{i+1},\dots,z_t) =
  \psi_{\vec{\pi}'}(y_i,z_2,\dots,z_{i-1},z_1,z_{i+1},\dots,z_t).
\end{align}
Therefore proving the claim comes down to bounding
$\psi_{\vec{\pi}'}(y_i,z_2,\dots,z_{i-1},z_1,z_{i+1},\dots,z_t)$, which is a combination of
Lemma~\ref{lem:powerupperbound} and Lemma~\ref{lem:matrixcloseone} as follows.  For the remainder of
this proof, denote by $A$ the matrix $A[\psi_{\vec{\pi}'}^{2^{t-1}}]$.  By assumption, we have
$\lambda_2(A) = o(\lambda_1(A))$ so Lemma~\ref{lem:matrixcloseone} can be applied to the matrix
sequence $A$.  Next we would like to show that we can use $\hat{1}$ for $x$ in the statement of
Lemma~\ref{lem:matrixcloseone}; i.e.\ that $A(\hat{1},\hat{1}) = (1+o(1))\lambda_1(A)$.  By
Lemma~\ref{lem:powerupperbound} and the assumption $\lambda_1(A) =
(1+o(1))\psi(\hat{1},\dots,\hat{1})^{2^{t-1}}$, we have
\begin{align*}
  \left| \psi_{\vec{\pi}'}(\hat{1},\dots,\hat{1}) \right|^{2^{t-1}} \leq
  \left| A(\hat{1},\hat{1}) \right| \leq \lambda_1(A)
  = (1+o(1))\psi\left(\hat{1},\dots,\hat{1} \right)^{2^{t-1}}.
\end{align*}
Using the definition of $\psi_{\vec{\pi}'}$, we have $\psi_{\vec{\pi}'}(\hat{1},\dots,\hat{1}) =
\psi(\hat{1},\dots,\hat{1})$, which implies asymtotic equality through the above equation.  In
particular, $|A(\hat{1},\hat{1})| = (1+o(1))\lambda_1(A)$ which is the condition in
Lemma~\ref{lem:matrixcloseone} for $x = \hat{1}$.  Lastly, to apply Lemma~\ref{lem:matrixcloseone}
we need a vector $y$ perpendicular to $\hat{1}$.  The vector $y_i \otimes \dots \otimes y_i \in
V^{\otimes k_i 2^{t-2}}$ is pependicular to $\hat{1}$ (in $V^{\otimes k_i 2^{t-2}}$) since $y_i$
itself is perpendicular to $\hat{1}$ (in $V^{\otimes k_i})$.  Thus Lemma~\ref{lem:matrixcloseone}
implies that
\begin{align} \label{eq:algfactsboundAaty}
  \left| A(\underbrace{y_i \otimes \dots \otimes y_i}_{2^{t-2}}, 
  \underbrace{y_i \otimes \dots \otimes y_i}_{2^{t-2}}
  ) \right| =
  o(\lambda_1(A)).
\end{align}
Using Lemma~\ref{lem:powerupperbound} again shows that
\begin{align*}
  \left| \psi_{\vec{\pi}'}(y_i,z_2,\dots,z_{i-1},z_1,z_{i+1},\dots,z_t) \right|^{2^{t-1}} \leq \left|
  A(\underbrace{y_i \otimes \dots \otimes y_i}_{2^{t-2}}, 
  \underbrace{y_i \otimes \dots \otimes y_i}_{2^{t-2}}
  ) \right|.
\end{align*}
Combining this equation with \eqref{eq:algfactreorderpi} and \eqref{eq:algfactsboundAaty} shows that
$\left| \psi_{\vec{\pi}}(z_1,\dots,z_{i-1},y_i,z_{i+1},\dots,z_t)\right|^{2^{t-1}} =
o(\lambda_1(A))$.  By assumption, $\lambda_1(A) = (1+o(1))\psi(\hat{1},\dots,\hat{1})^{2^{t-1}}$,
completing the proof of the claim.

%
\end{proof} 

\section{Cycles and Traces} 
\label{sec:cycles-and-traces}

A key result we require from~\cite{hqsi-lenz-quasi12} relates the count of the number of cycles of
type $\pi$ and length $4\ell$ to the trace of the matrix $A[\tau_{\vec{\pi}}^{2^{t-1}}]^{2\ell}$.  We
will use this result (Proposition~\ref{prop:powerscountcycles} below) as a black box, but for
completeness we first give the definition of the cycle $C_{\pi,4\ell}$.

\begin{definition}
  Let $\vec{\pi} = (1,\dots,1)$ be the ordered partition of $t$ into $t$ parts. Define the
  \emph{step of type $\vec{\pi}$}, denoted $S_{\vec{\pi}}$, as follows.  Let $A$ be a vertex set of
  size $2^{t-1}$ where elements are labeled by binary strings of length $t-1$ and let
  $B_{2},\dots,B_t$ be disjoint sets of size $2^{t-2}$ where elements are labeled by binary strings
  of length $t-2$.  The vertex set of $S_{\vec{\pi}}$ is the disjoint union $A \dot\cup B_{2}
  \dot\cup \cdots \dot\cup B_t$. Make $\{a,b_{2},\dots,b_t\}$ a hyperedge of $S_{\vec{\pi}}$ if $a
  \in A$, $b_j \in B_j$, and the code for $b_{j+1}$ is equal to the code formed by removing the
  $j$th bit of the code for $a$.

  For a general $\vec{\pi} = (k_1,\dots,k_t)$, start with $S_{(1,\dots,1)}$ and enlarge each vertex
  into the appropriate size; that is, a vertex in $A$ is expanded into $k_1$ vertices and each
  vertex in $B_j$ is expanded into $k_j$ vertices.  More precisely, the vertex set of
  $S_{\vec{\pi}}$ is $(A \times [k_1]) \dot\cup (B_2 \times [k_2]) \dot\cup \cdots \dot\cup (B_t
  \times [k_t])$, and if $\{a,b_2,\dots,b_t\}$ is an edge of $S_{(1,\dots,1)}$, then
  $\{(a,1),\dots,(a,k_1),(b_2,1),\dots, \linebreak[1] (b_2,k_2),\dots, \linebreak[1]
  (b_t,1),\dots,(b_t,k_t)\}$ is a hyperedge of $S_{\vec{\pi}}$.

  This defines the \emph{step of type $\vec{\pi}$}, denoted $S_{\vec{\pi}}$.  Let $A^{(0)}$ be the
  ordered tuple of vertices of $A$ in $S_{\vec{\pi}}$ whose binary code ends with zero and $A^{(1)}$
  the ordered tuple of vertices of $A$ whose binary code ends with one, where vertices are listed in
  lexicographic order within each $A^{(i)}$.  These tuples $A^{(0)}$ and $A^{(1)}$ are the two
  \emph{attach tuples of $S_{\vec{\pi}}$}
\end{definition}

\cite[Figures 1 and 2]{hqsi-lenz-quasi12} contains figures of steps for various $k$ and $\pi$.

\begin{definition}
  Let $\ell \geq 1$.  The \emph{path of type $\vec{\pi}$ of length $2\ell$}, denoted
  $P_{\vec{\pi},2\ell}$, is the hypergraph formed from $\ell$ copies of $S_{\vec{\pi}}$ with
  successive attach tuples identified.  That is, let $T_1,\dots,T_{\ell}$ be copies of
  $S_{\vec{\pi}}$ and let $A^{(0)}_{i}$ and $A^{(1)}_{i}$ be the attach tuples of $T_i$.  The
  hypergraph $P_{\vec{\pi},2\ell}$ is the hypergraph consisting of $T_1,\dots,T_{\ell}$ where the
  vertices of $A^{(1)}_{i}$ are identified with $A^{(0)}_{i+1}$ for every $1 \leq i \leq \ell - 1$.
  (Recall that by definition, $A^{(1)}_i$ and $A^{(0)}_{i+1}$ are tuples (i.e.\ ordered lists) of
  vertices, so the identification of $A^{(1)}_{i}$ and $A^{(0)}_{i+1}$ identifies the corresponding
  vertices in these tuples.) The \emph{attach tuples of $P_{\vec{\pi},2\ell}$} are the tuples
  $A^{(0)}_{1}$ and $A^{(1)}_{\ell}$.
\end{definition}

\begin{definition}
  Let $\ell \geq 2$.  The \emph{cycle of type $\pi$ and length $2\ell$}, denoted $C_{\pi,2\ell}$,
  is the hypergraph formed by picking any ordering $\vec{\pi}$ of $\pi$ and identifying the attach
  tuples of $P_{\vec{\pi},2\ell}$.
\end{definition}

Figure~\ref{fig:cycle} and \cite[Figures 3 and 4]{hqsi-lenz-quasi12} contains figures of paths and
cycles for various $k$ and $\pi$.  The definition of $C_{\pi,2\ell}$ is independent of the ordering
$\vec{\pi}$; a proof appears in~\cite{hqsi-lenz-quasi12-arxiv}.

\begin{figure}[ht] 
\begin{center}
\begin{tikzpicture}[scale=0.7]
  \tikzstyle{foot}=[font=\footnotesize]

  \node (v00) at (0,0) [vertex] {};
  \node (v10) at (3,3) [vertex] {};
  \node (v11) at (6,0) [vertex] {};
  \node (v01) at (3,-3) [vertex] {};
 
  \draw (0,-3) +(45:0.5) node (s00) [vertex] {};
  \draw (0,-3) +(225:0.5) node (s10) [vertex] {};
  \draw (6,3) +(45:0.5) node (s11) [vertex] {};
  \draw (6,3) +(225:0.5) node (s01) [vertex] {};

  \draw (0,3) +(135:0.5) node (d10) [vertex] {};
  \draw (0,3) +(-45:0.5) node (d00) [vertex] {};
  \draw (6,-3) +(135:0.5) node (d01) [vertex] {};
  \draw (6,-3) +(-45:0.5) node (d11) [vertex] {};

  \draw (d10).. controls (v00).. (s10);
  \draw (s00).. controls (v00).. (d00);
  \draw (s10).. controls (v01).. (d11);
  \draw (s00).. controls (v01).. (d01);
  \draw (d01).. controls (v11).. (s01);
  \draw (d11).. controls (v11).. (s11);
  \draw (d00).. controls (v10).. (s01);
  \draw (d10).. controls (v10).. (s11);
\end{tikzpicture}
\caption{$C_{(1,1,1),4}$}
\label{fig:cycle}
\end{center}
\end{figure}
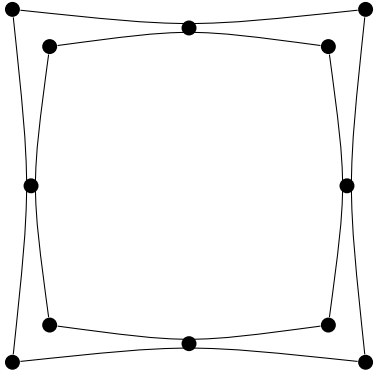 

\begin{definition}
  Let $\ell \geq 2$.  A \emph{circuit of type $\pi$ of length $2\ell$ in a hypergraph $H$} is a
  function $f : V(C_{\pi,2\ell}) \rightarrow V(H)$ that preserves edges.  Informally, a circuit is a
  cycle where the vertices are not necessarily distinct.
\end{definition}

\begin{prop} \label{prop:powerscountcycles} \powerscountcyclescite
  Let $H$ be a $k$-uniform hypergraph, let $\vec{\pi}$ be a proper ordered partition of $k$, and let
  $\ell \geq 2$ be an integer.  Let $\tau$ be the adjacency map of $H$. Then
  $\tr{A[\tau_{\vec{\pi}}^{2^{t-1}}]^\ell}$ is the number of labeled circuits of type $\vec{\pi}$
  and length $2\ell$ in $H$.
\end{prop}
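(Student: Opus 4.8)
The statement is quoted verbatim from \powerscountcyclescite, so strictly speaking the plan is to cite it; but here is how one would reconstruct the argument. The plan is to expand $\tr{A[\tau_{\vec\pi}^{2^{t-1}}]^\ell}$ as a sum over closed walks of length $\ell$ in the matrix $A := A[\tau_{\vec\pi}^{2^{t-1}}]$, identify each such closed walk with a labeled circuit of $C_{\pi,2\ell}$ in $H$, and check the correspondence is a bijection. Concretely, $A$ is a bilinear form on $V_1^{\otimes 2^{t-2}}$ (with $V_1$ the $|V(H)|$-dimensional space), so choosing the standard-basis indexing its rows and columns are labeled by tuples $\bar u \in V(H)^{2^{t-2}}$, and $\tr{A^\ell} = \sum_{\bar u^{(1)},\dots,\bar u^{(\ell)}} A_{\bar u^{(1)},\bar u^{(2)}} A_{\bar u^{(2)},\bar u^{(3)}} \cdots A_{\bar u^{(\ell)},\bar u^{(1)}}$, a sum of $\ell$-fold products of entries of $A$.

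The first substantive step is to unwind what a single entry $A_{\bar u,\bar v}$ is: by the definition of $A[\phi^{2^{t-1}}]$ it equals $\tau_{\vec\pi}^{2^{t-1}}$ evaluated at the interleaved tensor $u_1\otimes v_1\otimes\cdots\otimes u_{2^{t-2}}\otimes v_{2^{t-2}}$, and then by unfolding the recursion $\phi^{2^s} = \phi^{2^{s-1}}\ast\phi^{2^{s-1}}$ through $s = t-1,\dots,0$ one sees this is a sum, over all assignments of basis vectors to the ``internal'' tensor slots introduced at each $\ast$, of a product of $2^{t-1}$ values of $\tau_{\vec\pi}$ on basis inputs — i.e.\ a product of $0/1$ edge-indicators of $H$. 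The combinatorial bookkeeping of these internal slots is exactly encoded by the step $S_{\vec\pi}$: the set $A$ of size $2^{t-1}$ indexes the $\tau_{\vec\pi}$-factors, and the sets $B_2,\dots,B_t$ index the summation variables created at the successive $\ast$ operations, with the ``remove the $j$th bit'' adjacency rule recording which summation variable feeds which factor. So $A_{\bar u,\bar v}$ counts edge-preserving maps $S_{\vec\pi}\to H$ whose two attach tuples $A^{(0)},A^{(1)}$ map to $\bar u$ and $\bar v$ respectively (after blowing up each $A$-vertex into $k_1$ vertices, each $B_j$-vertex into $k_j$ vertices, matching the general-$\vec\pi$ construction).

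Given that, the second step is routine gl  uing: a product $A_{\bar u^{(1)},\bar u^{(2)}}\cdots A_{\bar u^{(\ell)},\bar u^{(1)}}$ counts $\ell$-tuples of edge-preserving maps $S_{\vec\pi}\to H$ in which the outgoing attach tuple of the $i$th copy agrees (vertex by vertex, as a tuple) with the incoming attach tuple of the $(i+1)$st, indices mod $\ell$ — and that is precisely the definition of an edge-preserving map $C_{\pi,2\ell}\to H$, since $C_{\pi,2\ell}$ is $\ell$ copies of $S_{\vec\pi}$ with successive attach tuples identified and the two ends glued. Summing over all $\bar u^{(1)},\dots,\bar u^{(\ell)}$ removes the constraint of fixing the attach-tuple images, leaving exactly the number of labeled circuits of type $\pi$ and length $2\ell$ in $H$; ``labeled'' because the $C_{\pi,2\ell}$-vertices carry distinguished labels and we count functions, not isomorphism classes, and ``circuit'' rather than ``copy'' because the maps need not be injective. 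One should also note the orientation ambiguity (which end of $C_{\pi,2\ell}$ is ``first'') matches the cyclic freedom in $\tr{A^\ell}$, and invoke the independence of $C_{\pi,2\ell}$ on the ordering $\vec\pi$ (cited from \cite{hqsi-lenz-quasi12-arxiv}) so the count only depends on $\pi$.

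The main obstacle is the first step — pinning down rigorously that one entry $A_{\bar u,\bar v}$ is the number of labeled copies of $S_{\vec\pi}$ with prescribed attach-tuple images. This requires carefully expanding the nested $\ast$-recursion, tracking exactly which summation index (element of some $B_j$) appears in which of the $2^{t-1}$ copies of $\tau_{\vec\pi}$, and matching the resulting incidence structure bit-for-bit against the ``remove the $j$th bit'' rule defining $S_{(1,\dots,1)}$; the blow-up to general $\vec\pi$ is then bookkeeping, but the base case $\vec\pi = (1,\dots,1)$ is where the real verification lies. Everything after that is formal. Since the paper uses this as a black box, I would in fact just write: this is \powerscountcyclescite.
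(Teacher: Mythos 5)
The paper does not prove this proposition: it explicitly says it will use it as a black box, citing \powerscountcyclescite. So there is no in-paper argument to compare your proposal against, and your concluding remark that one would simply cite the reference is in fact exactly what the paper does.

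That said, your sketch of a reconstruction is plausible and consistent with how the definitions line up. Unwinding the $\ast$-recursion from $\phi^{2^{t-1}}$ down to $\phi$ introduces, at recursion depth $d$ for $d = 1,\dots,t-1$, a total of $2^{t-2}$ basis-summation indices valued in $V_{d+1}$, which matches the sizes and labels of $B_2,\dots,B_t$; the $2^{t-1}$ tensor factors in the single argument slot of $\phi^{2^{t-1}}$ match the set $A$ of size $2^{t-1}$; and the interleaving $u_1\otimes v_1\otimes\cdots\otimes u_{2^{t-2}}\otimes v_{2^{t-2}}$ that defines $A[\phi^{2^{t-1}}]$ matches the last-bit partition of $A$ into $A^{(0)}$ and $A^{(1)}$. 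Two small imprecisions: you identify $V_1$ with the $|V(H)|$-dimensional space, which is true only when $k_1=1$ (in general $V_1 = W^{\otimes k_1}$, and the blow-up step $A\times[k_1]$, $B_j\times[k_j]$ in the definition of $S_{\vec{\pi}}$ is exactly what reconciles the two pictures); and the ``orientation ambiguity'' you flag is not actually there---a labeled circuit is a function out of the labeled vertex set of $C_{\pi,2\ell}$, and each such function corresponds to exactly one closed walk $(\bar{u}^{(1)},\dots,\bar{u}^{(\ell)})$ together with one choice of internal images, so the correspondence is a direct bijection with no cyclic overcounting to absorb.
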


\section{\propcycle[4\ell]{$\pi$} $\Rightarrow$ \propeig{$\pi$}} 
\label{sec:nonregular-cycle-to-eig}

In this section, we prove that \propcycle[4\ell]{$\pi$} $\Rightarrow$ \propeig{$\pi$} using
Propositions~\ref{prop:algebraicfacts} and~\ref{prop:powerscountcycles}.

\begin{proof}[\prooftext that {\texttt{Cycle}$_{4\ell}$[$\pi$]} $\Rightarrow$ \propeig{$\pi$}] 
\label{proof:c4toeig}
Let $\mathcal{H} = \{H_n\}_{n \rightarrow \infty}$ be a sequence of hypergraphs and let $\tau_n$ be
the adjacency map of $H_n$.  For notational convenience, the subscript on $n$ is dropped below.
Throughout this proof, we use $\hat{1}$ to denote the all-ones vector scaled to unit length.
Wherever we use the notation $\hat{1}$, it is the input to a multilinear map and so $\hat{1}$
denotes the all-ones vector in the appropriate vector space corresponding to whatever space the map
is expecting as input.  This means that in the equations below $\hat{1}$ can stand for different vectors
in the same expression, but attempting to subscript $\hat{1}$ with the vector space (for example
$\hat{1}_{V_3}$) would be notationally awkward.

The proof that \propcycle[4\ell]{$\pi$} $\Rightarrow$ \propeig{$\pi$} comes down to checking the
conditions of Proposition~\ref{prop:algebraicfacts}.  Let $\vec{\pi}$ be any ordering of the entries
of $\pi$.  We will show that the first and second largest eigenvalues of $A =
A[\tau_{\vec{\pi}}^{2^{t-1}}]$ are separated.  Let $m = |E(C_{\pi,4\ell})| = 2\ell 2^{t-1}$ and note
that $|V(C_{\pi,4\ell})| = mk/2$ since $C_{\pi,4\ell}$ is two-regular.  $A$ is a square symmetric
real valued matrix, so let $\mu_1,\dots, \mu_d$ be the eigenvalues of $A$ arranged so that $|\mu_1|
\geq \dots \geq |\mu_d|$, where $d = \dim(A)$.  The eigenvalues of $A^{2\ell}$ are $\mu_1^{2\ell},
\dots, \mu_d^{2\ell}$ and the trace of $A^{2\ell}$ is $\sum_i \mu_i^{2\ell}$.  Since all
$\mu_i^{2\ell} \geq 0$, Proposition~\ref{prop:powerscountcycles} and \propcycle[4\ell]{$\pi$}
implies that
\begin{align} \label{eq:c4toeigboundlambda}
  \mu_1^{2\ell} + \mu_2^{2\ell} 
  &\leq \tr{A^{2\ell}} 
  = \#\{\text{possibly degenerate } C_{\pi,4\ell} \,\, \text{in } H_n\} 
  \leq p^m n^{mk/2} + o(n^{mk/2}).
\end{align}
We now verify the conditions on $\mu_1$ and $\mu_2$ in Proposition~\ref{prop:algebraicfacts}, and to
do that we need to compute $\tau(\hat{1},\dots,\hat{1})$.  Simple computations show that
\begin{align} \label{eq:c4toeigexacttauallones}
  \tau(\hat{1},\dots,\hat{1}) = \tau_{\vec{\pi}}(\hat{1},\dots,\hat{1}) = \frac{k! E(H)}{n^{k/2}}.
\end{align}
Using that $|E(H_n)| \geq p\binom{n}{k} + o(n^k)$, Lemma~\ref{lem:largesteigpower}, and
$\mu_1^{2\ell} \leq p^m n^{mk/2} + o(n^{mk/2})$ from \eqref{eq:c4toeigboundlambda},
\begin{align} \label{eq:c4toeigboundtaunorm}
  pn^{k/2} + o(n^{k/2}) &\leq \frac{k! E(H)}{n^{k/2}} = \tau_{\vec{\pi}}(\hat{1},\dots,\hat{1}) \leq
  \left\lVert \tau_{\vec{\pi}} \right\rVert 
  \leq \mu_1^{1/2^{t-1}} \leq pn^{k/2} + o(n^{k/2}).
\end{align}
This implies equality up to $o(n^{k/2})$ throughout the above expression, so
$\tau(\hat{1},\dots,\hat{1}) = pn^{k/2} + o(n^{k/2})$, $\lambda_{1,\pi}(H_n) = \left\lVert
\tau_{\vec{\pi}} \right\rVert = pn^{k/2} + o(n^{k/2})$, and $\mu_1 = p^{2^{t-1}} n^{k2^{t-2}} +
o(n^{k2^{t-2}})$, so $\mu_1 = (1+o(1))\tau(\hat{1},\dots,\hat{1})^{2^{t-1}}$.

Insert $\mu_1 = p^{2^{t-1}}
n^{k2^{t-2}} + o(n^{k2^{t-2}})$ into \eqref{eq:c4toeigboundlambda} to show that $\mu_2 =
o(n^{k2^{t-2}})$.  Therefore, the conditions of Proposition~\ref{prop:algebraicfacts} are satisfied,
so
\begin{align*}
  \left\lVert \tau_{\vec{\pi}} - qJ_{\vec{\pi}} \right\rVert = o(\tau(\hat{1},\dots,\hat{1})) =
  o(n^{k/2}),
\end{align*}
where $q = n^{-k/2}\tau(\hat{1},\dots,\hat{1})$. Using \eqref{eq:c4toeigexacttauallones}, $q =
k!|E(H)|/n^k$.  Thus $\left\lVert \tau_{\vec{\pi}} - qJ_{\vec{\pi}} \right\rVert =
\lambda_{2,{\pi}}(H_n)$ and the proof is complete.
\end{proof} 

The above proof can be extended to even length cycles in the case when $\vec{\pi} = (k_1,k_2)$ is a
partition into two parts.  For these $\vec{\pi}$, the matrix $A[\tau_{\vec{\pi}}^2]$ can be
shown to be positive semidefinite since $A[\tau_{\vec{\pi}}^2]$ will equal $MM^T$ where $M$ is the
matrix associated to the bilinear map $\tau_{\vec{\pi}}$.  Since $A[\tau_{\vec{\pi}}^2]$ is positive
semidefinite, each $\mu_i \geq 0$ so any power of $\mu_i$ is non-negative.  For partitions into more
than two parts, we don't know if the matrix $A[\tau_{\vec{\pi}}^{2^{t-1}}]$ is always positive
semidefinite or not.

\bibliographystyle{abbrv}
\bibliography{refs.bib}

\end{document}